\documentclass[a4paper, 12pt]{article}
\usepackage{amsmath}%
\usepackage{amsthm}
\usepackage{amsfonts}%
\usepackage{amssymb}%
\usepackage[abbrev, nobysame]{amsrefs}
\usepackage{enumitem}
\usepackage{graphicx}
\usepackage{fullpage}
\usepackage{color}
\usepackage[utf8]{inputenc}
\usepackage[english]{babel}
\usepackage{soul}
\usepackage{cancel}

\newcommand{\eps}{\varepsilon}
\newcommand{\la}{\lambda}

\newcommand{\Rb}{\mathbb{R}}
\newcommand{\Vol}{\mathrm{Vol}}
\newcommand{\Div}{\mathrm{div}}
\newcommand{\inj}{\mathrm{inj}}
\newcommand{\out}{\mathrm{out}}
\newcommand{\gr}{\mathrm{gr}}
\newcommand{\inrad}{\mathrm{inrad}}
\newcommand{\CAP}{\mathrm{cap}}
\newcommand{\dif}{\mathrm{d}}
\newcommand{\me}{\mathrm{e}}

\newtheorem{theorem}{Theorem}[section]

\newtheorem{corollary}[theorem]{Corollary}

\newtheorem{definition}[theorem]{Definition}

\newtheorem{lemma}[theorem]{Lemma}
\newtheorem{notation}[theorem]{Notation}

\newtheorem{proposition}[theorem]{Proposition}
\newtheorem*{remark}{Remark}

\begin{document}
\title{The inner radius of nodal domains in high dimensions}
\author{Philippe Charron, Dan Mangoubi}
\date{\small{\sl\it In honor of Leonid Polterovich's 60th birthday}}
\maketitle
\begin{abstract}
    We prove that every nodal domain of an eigenfunction of the Laplacian of eigenvalue~$\lambda$ on a $d$-dimensional closed Riemannian manifold contains a ball of radius~$c\lambda^{-\frac{1}{2}}(\log\lambda)^{-\frac{d-2}{2}}$. This ball is  centered at a point at which the eigenfunction attains its maximum in absolute value within the nodal domain.
\end{abstract}
\section{Introduction and main result}
Let $(M,g)$ be a smooth closed Riemannian manifold of dimension $d$. 
Consider on~$M$ an eigenfunction~$u_\lambda$ of the positive Laplace-Beltrami operator $-\Delta_g$ corresponding to an eigenvalue~$\lambda$. A nodal domain~$\Omega_\lambda$ of~$u_\lambda$ is any connected component of the set $\{u_\lambda\neq 0\}$. It is well known (see~\cites{bruning78, berard-meyer}) that there exists a positive constant $c_{\mathrm{up}}=c_{\mathrm{up}}(M, g)$ independent of~$\lambda$ or $u_\lambda$ such that
every ball of radius bigger than $c_{\mathrm{up}}\lambda^{-1/2}$ contains a zero of~$u_{\lambda}$, i.e., the inner radius of~$\Omega_\lambda$ is bounded from above:
$$\inrad(\Omega_{\lambda})\leq c_{\mathrm{up}}\lambda^{-1/2}\ .$$
On the other hand, the Faber-Krahn inequality~\cites{faber, krahn} shows that the volume of every nodal domain~$\Omega_\lambda$ is bounded from below by $c_{FK}\lambda^{-d/2}$ for some positive constant $c_{FK}=c_{FK}(M)$.

One is naturally led to look for the largest positive number~$r$ such that every nodal domain contains a ball of radius~$r$. 
In two dimensions it is known that one can inscribe a ball of radius $c\lambda^{-1/2}$ in every nodal domain~\cite{mang-inrad}. This paper is concerned with lower bounds for the inner radius in higher dimensions. We give a lower bound which is optimal up to a logarithmic power factor at most.
Moreover, we show
\begin{theorem}\label{thm:innerradius}
Let $(M, g)$ be of dimension~$d$ at least three.
Let $x_{\max}\in\Omega_{\lambda}$ be a point where 
$|u_\lambda(x_{\max})|=\max_{\Omega_\lambda} |u_{\lambda}|$. Then
    $$B\left(x_{\max}, c_{\mathrm{lo}}\lambda^{-1/2}(\log\lambda)^{-(d-2)/2}\right)\subset\Omega_\lambda$$
    where $c_{\mathrm{lo}}=c_{\mathrm{lo}}(M, g)$ is a positive constant which depends only on~$(M, g)$.
\end{theorem}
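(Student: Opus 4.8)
\emph{Proof sketch.} The plan is to localize the problem at the wavelength scale and to exploit that $x_{\max}$ is not a point of $\partial\Omega_\lambda$ (where $u_\lambda$ vanishes) but an interior point of $\Omega_\lambda$ at which $|u_\lambda|$ is maximal over the whole domain. Replacing $u_\lambda$ by $-u_\lambda$ if needed we may assume $u_\lambda>0$ on $\Omega_\lambda$, and after dilating we normalize $u_\lambda(x_{\max})=1$; then $0<u_\lambda\le 1$ on $\Omega_\lambda$, the point $x_{\max}$ is an interior local maximum of $u_\lambda$, so $\nabla u_\lambda(x_{\max})=0$, and the eigenequation forces $\operatorname{tr}\nabla^2 u_\lambda(x_{\max})=-\lambda$. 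In geodesic normal coordinates at $x_{\max}$, dilated by $\sqrt\lambda$, the rescaled function $v$ solves a uniformly elliptic equation $\Delta_{g_\lambda}v+v=0$ whose coefficients are real-analytically close to the Euclidean ones on any fixed ball, with bounds uniform in $\lambda$; $v(0)=1$ is a local maximum, $|v|\le 1$ on the rescaled nodal domain $\widetilde\Omega$, and it suffices to prove $B\bigl(0,c(\log\lambda)^{-(d-2)/2}\bigr)\subset\widetilde\Omega$.

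The first step is the elementary consequence of the local maximum. Solutions of $\Delta_{g_\lambda}v+v=0$ are real analytic, with Cauchy estimates $\sup_{B(0,r)}|\nabla^k v|\le k!\,(A/s)^k\sup_{B(0,s)}|v|$ for $0<r<s\le1$ and $A=A(M,g)$. Writing out the Taylor series at $0$ and using $v(0)=1$, $\nabla v(0)=0$, we get $v(x)\ge 1-C(M,g)\,K\,|x|^2$ for $|x|$ below a fixed constant, where $K:=\sup_{B(0,1)}|v|\ (\ge1)$; hence $v>0$ on $B\bigl(0,c(M,g)\,K^{-1/2}\bigr)$, so this ball avoids the nodal set and, being connected and containing $x_{\max}$, lies in $\widetilde\Omega$. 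Undoing the rescaling, $\operatorname{dist}(x_{\max},\partial\Omega_\lambda)\ge c(M,g)\,K^{-1/2}\lambda^{-1/2}$, so the theorem reduces to the growth estimate
\[
K\;=\;\frac{\sup_{B(x_{\max},\,\lambda^{-1/2})}|u_\lambda|}{|u_\lambda(x_{\max})|}\;\le\;C(M,g)\,(\log\lambda)^{\,d-2}.
\]

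The heart of the matter is this growth estimate. Here I would combine two facts about eigenfunctions at the wavelength scale. First, the doubling (frequency) index of $u_\lambda$ is bounded by a universal constant on balls of radius $\le c\lambda^{-1/2}$, so halving such a radius multiplies $\sup|u_\lambda|$ by at most a fixed factor $C_0=C_0(M,g)$. Second, the local asymmetry of nodal domains: no nodal domain contains a ball of radius comparable to $\lambda^{-1/2}$ — this is the bound $\inrad(\Omega_\lambda)\le c_{\mathrm{up}}\lambda^{-1/2}$ of the introduction, reinforced by the Faber--Krahn lower bound on $\Vol(\Omega_\lambda)$ — so $\Omega_\lambda$ occupies only a bounded fraction of every such ball. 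Because $x_{\max}$ is a local maximum we have $\sup_{B(x_{\max},s)}|u_\lambda|=|u_\lambda(x_{\max})|$ for $s\lesssim K^{-1/2}\lambda^{-1/2}$; propagating this up to scale $\lambda^{-1/2}$ by doubling alone costs $\sim\log K$ steps and gives only $K\le C_0^{\,c\log K}$, hence nothing when $C_0$ is large. One must instead iterate a sharper one-scale estimate in which, at each of the $\sim\log\lambda$ dyadic scales between a fixed radius and $\lambda^{-1/2}$, the asymmetry of $\Omega_\lambda$ is played against a barrier built from the Newtonian kernel of $\Delta+\lambda$. This is where the dimension enters, through $\CAP(B_r)\asymp r^{d-2}$, and where the one-scale gains multiply up to $(\log\lambda)^{d-2}$.

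I expect this iteration to be the main obstacle: the one-scale step must be quantitatively clean enough that the accumulated loss over $\log\lambda$ scales is a fixed power of $\log\lambda$ rather than a power of $\lambda$. Two features demand care. Since $|u_\lambda|$ may be far larger than $|u_\lambda(x_{\max})|$ just outside $\Omega_\lambda$ — a small nodal domain flanked by a large one — the comparison has to use the unsigned $|u_\lambda|$ and the maximality of $u_\lambda$ over $\Omega_\lambda$ at every scale, not only at the end. And the Newtonian barrier degenerates exactly when $d=2$, matching the fact that no logarithm is needed there; for $d\ge 3$ the barrier's inner radius must be tuned scale by scale so that the capacities telescope to the stated power of $\log\lambda$.
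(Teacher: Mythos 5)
There is a genuine gap, and it sits exactly where you predict.

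Your reduction step is sound in spirit but misquotes the tool: since $x_{\max}$ is an interior critical point with $v(0)=1$, $\nabla v(0)=0$, a second-order Taylor expansion with Lagrange remainder plus a Schauder interior estimate on $\nabla^2 v$ (no analyticity needed, and none is available since $(M,g)$ is only smooth) gives $v(x)\geq 1-CK|x|^2$ on a fixed ball, hence $B(0,cK^{-1/2})\subset\widetilde\Omega$. So the theorem would indeed follow from the growth estimate $K\lesssim(\log\lambda)^{d-2}$ at wavelength scale. That part is fine.

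The part you call "the heart of the matter" is where the proposal fails. First, the claim that the doubling index of $u_\lambda$ is \emph{bounded by a universal constant on wavelength balls} is false: Donnelly--Fefferman (Theorem~\ref{thm:df-growth}) gives $N(u_\lambda,B)\leq C\sqrt{\lambda}$ at every scale up to the injectivity radius, and at a given small ball this can genuinely be of order $\sqrt{\lambda}$; it is only on average (after summation over a partition) that it is much smaller. So the premise of your one-scale step is wrong. Second, and more fundamentally, the one-scale step itself --- ``play the asymmetry against a Newtonian barrier for $\Delta+\lambda$ and telescope over $\log\lambda$ dyadic scales'' --- is not provided, and you acknowledge it is the main obstacle. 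This is precisely the move that works in $d=2$ (via harmonic measure majorization) and that the paper explicitly says does \emph{not} transfer to $d\geq 3$. What actually makes the paper's argument close is a different pair of inputs that your sketch never invokes: (i) the Georgiev--Mukherjee almost-inscribed ball (Theorem~\ref{thm:capacity}), which controls the \emph{volume} of $B_\delta\setminus\Omega_\lambda$ well enough that each subcube of a fine partition still sees $\Omega_\lambda$ on at least half its volume, and (ii) the Logunov--Malinnikova Remez-type inequality (Corollary~\ref{cor:remez-eigen}), which converts ``$u_\lambda$ is small on half of the cube'' plus ``$\sup$ is large'' into ``the doubling index is large,'' yielding an \emph{exponentially growing} chain of doubling indices $N(q_k)\geq(1+c_2)^{k-1}(c_2N(q_0)-c_3)$. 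It is this exponential growth, collided against the linear bound $N\leq C\sqrt{\lambda}$, that forces $N(q_0)=O(1)$ once the number of subcubes per side is $A\asymp\log\lambda$; the constraint $A\lesssim\delta^{-2/(d-2)}$ from the capacity/volume estimate is what produces the $(\log\lambda)^{-(d-2)/2}$ loss. Also note the paper iterates over $A^d$ congruent subcubes of a \emph{single} fine scale $\delta\lambda^{-1/2}$, not over dyadic scales from $\lambda^{-1/2}$ downward; a naive dyadic scheme costs a factor of $2^{O(\sqrt\lambda)}$ per step in the worst case and does not close. Without the Remez inequality (or some genuine substitute for the one-scale gain), the plan as written cannot be completed.
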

Lower bounds of the form $\lambda^{-c(d)}$ on the inner radius  of nodal domains were obtained in~\cites{mang-inrad, mang-la} where $c(d)\to\infty$ as the dimension~$d$ increases. Under the assumption of a real analytic metric a lower bound with a constant power of the eigenvalue, namely~$c\lambda^{-1}$ was obtained by Georgiev in~\cite{geor}.

\paragraph{Idea of proof.} An important starting point is the existence of an almost inscribed ball~$B$ of radius~$r=\delta \lambda^{-1/2}$ for some small $\delta>0$, centered at~$x_{\max}$, guaranteed by   Georgiev and Mukherjee~\cite{geor-mukh} (cf.\ earlier works by Lieb and Maz'ya-Shubin~\cites{lieb, mazya-shubin} where existence of such a ball is proved without locating its center). This means that the relative volume of $\Omega_\lambda$ in~$B$  tends to~$1$ as~$\delta$ tends to~$0$. Our aim is to show that the complementary set $B\setminus\Omega_\lambda$ cannot approach the center too much, or that the gradient of~$u_\lambda$ is bounded, say in~$\frac{1}{4}B$. A standard elliptic gradient estimate shows that it would be enough to bound the values of~$|u_\lambda|$ in~$\frac{1}{2}B$.
The key argument shows that if in a ball~$b$ of radius $r/A$, where~$A$ is a suitable large number, the 
function $u_\lambda$ attains a large value, then its doubling index is also large in this ball. This lets us find  a finite sequence of~$A$ disjoint balls~$b_i$  where the corresponding doubling indices grow exponentially, to the extent that we get a contradiction with the Donnelly-Fefferman's growth bound. The argument is based on a recent Remez-type inequality for eigenfunctions due to Logunov and Malinnkova (see~\cites{logu-mali-icm, logu-mali-pcmi}). 

\paragraph{Comparison with two dimensions.}
Complex methods and a majorization principle 
comparing the harmonic measure of $B\setminus\Omega_\lambda$ in~$B$ with the eigenfunction allow to show in two dimensions that for a ball~$B=B(x_{\max}, \lambda^{-1/2})$ the complementary set $B\setminus\Omega$ cannot approach the center too much~\cite{mang-inrad}. However, the capacity estimates we apply in this paper, due to Lieb, Maz'ya-Shubin and Georgiev-Mukherjee~\cites{lieb, mazya-shubin, geor-mukh} are effective only in dimensions three and above. Interestingly,  Georgiev and Mukherjee~\cite{geor-mukh} prove this  capacity estimate by combining the above majorization principle with  heat flow methods.

\paragraph{Structure of the paper.}
The proof of the main Theorem~\ref{thm:innerradius} is given in \S\ref{sec:innerradius}.
Before that we present in \S\ref{sec:tools} the classical and more recent  tools needed for the proof.
In \S\ref{sec:geor-mukh} we reprove the existence of an almost inscribed ball with center located at a maximum point (Theorem~\ref{thm:capacity}) by methods which, in our view, are more oriented toward the PDE community than the Brownian motion tools used in~\cite{geor-mukh}.

\subsection{Acknowledgements}
We are very grateful to Stefano Decio for discussing aspects of this problem with us. We thank Yehuda Pinchover and Iosif Polterovich  for discussing with us some subtleties of the heat kernel estimates in \S\ref{sec:appendix} and for pointing our attention to Kannai's paper~\cite{kannai}.
We are very grateful to Sasha Logunov for pointing out the resemblance of our argument with Domar's proof of Levinson's $\log\log$ theorem (see~\citelist{\cite{koosis}*{Ch.\ VII, \S D.7}\cite{logu-papa}}). This significantly simplified the presentation of our proof.

The problem considered in this paper was originally suggested by Leonid Polterovich during DM's PhD thesis. DM wishes to express his deep gratitude to Leonid Polterovich for  his wise advice and for being a role model over the years.

PC was funded by the Zuckerman Postdoctoral Scholars Program at the Technion, and by the  NCCR SwissMAP, funded by the Swiss National Science Foundation (grant number 205607).

\section{Tool box: Growth of eigenfunctions, Remez-type inequality, almost inscribed balls, gradient estimate}\label{sec:tools}
\begin{notation}
Let $r_\inj$ denote the injectivity radius of $M$.
Let $B=B(x, r)$ be a geodesic ball in~$M$, where $2r<r_\inj$. We let $2B=B(x, 2r)$ be the concentric geodesic ball of twice the radius.
\end{notation}

\begin{definition}[The doubling index]
 Let $f$ be a function defined in a geodesic ball~$2B$. We let
$$N(f, B):=\log_2\frac{\sup_{2B} |f|}{\sup_{B} |f|}$$
\end{definition}

The following fundamental bound holds for eigenfunctions.
\begin{theorem}[Donnelly-Fefferman's growth bound \cite{don-fef88}]\label{thm:df-growth}
    Let~$(M, g)$ be a closed Riemannian manifold. Let $u_\lambda$ be a Laplace-Beltrami eigenfunction with corresponding eigenvalue~$\lambda$.
    Let $B=B(x, r)$ be a geodesic ball in~$M$ with $2r<\inj(M)$.
    Then 
    $$N(u_\lambda, B)\leq C_{DF}\sqrt{\lambda}\ ,$$
    for some $C_{DF}=C_{DF}(M, g)>0$.
\end{theorem}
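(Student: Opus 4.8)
The plan is to deduce the estimate from the classical three‑balls/doubling machinery for second‑order elliptic equations, after removing the eigenvalue by a one‑dimensional lift. First I would pass to the product manifold $\widehat M:=M\times\R$ with the product metric and set $\widehat u(x,t):=u_\lambda(x)\,e^{\sqrt\lambda\,t}$; since $\Delta_g u_\lambda=-\lambda u_\lambda$, this $\widehat u$ is harmonic on $\widehat M$ for a fixed, $\lambda$‑independent, smooth metric, and the only place $\lambda$ enters is through the explicit factor $e^{\sqrt\lambda t}$. Concretely, writing $\widehat B((x,0),\rho)\subset\widehat M$ for the geodesic ball and $B(x,\rho)\subset M$ for the corresponding one, for $\rho$ small one has
\[
  \sup_{B(x,\rho)}|u_\lambda|\;\le\;\sup_{\widehat B((x,0),\rho)}|\widehat u|\;\le\;e^{\sqrt\lambda\,\rho}\,\sup_{B(x,\rho)}|u_\lambda|\ ,
\]
the lower bound from the slice $t=0$ and the upper bound from $|t|\le\rho$ on $\widehat B$; in particular the global bound $\sup_{\widehat B((x,0),\rho)}|\widehat u|\le e^{\sqrt\lambda\rho}\max_M|u_\lambda|$ holds for every $\rho$.

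The key analytic input I would invoke is the Hadamard‑type three‑balls inequality for harmonic functions on a Riemannian manifold at small scales: there exist $r_0=r_0(M,g)>0$, $C=C(M,g)$ and $\alpha\in(0,1)$ so that for every harmonic $v$ on $\widehat M$, every $p$ and every $s\le r_0$,
\[
  \sup_{\widehat B(p,2s)}|v|\;\le\;C\,\big(\sup_{\widehat B(p,s)}|v|\big)^{\alpha}\big(\sup_{\widehat B(p,4s)}|v|\big)^{1-\alpha}\ ,
\]
together with the accompanying almost‑monotonicity of Almgren's frequency, which propagates the doubling index to smaller scales: $N(v,\widehat B(p,s))\le C\,N(v,\widehat B(p,r_0))$ for $s\le r_0$. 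Granting this, I would first treat large scales: since $M$ is compact and connected, a chain of boundedly many applications of the three‑balls inequality joining $B(x,r_0)$ to a ball containing a point where $|u_\lambda|=\max_M|u_\lambda|$ yields $\sup_{B(x,r_0)}|u_\lambda|\ge c_M\,e^{-C\sqrt\lambda}\,\max_M|u_\lambda|$, the $e^{-C\sqrt\lambda}$ loss being exactly what the global bound $\sup_{\widehat M}\!$‑scale estimate $\sup\widehat u\le e^{C\sqrt\lambda}\max|u_\lambda|$ produces through the Hölder exponents. Combined with the trivial $\sup_{B(x,2r)}|u_\lambda|\le\max_M|u_\lambda|$, this already gives $N(u_\lambda,B(x,r))\le C_1\sqrt\lambda$ for all $r_0\le r<\tfrac12\inj(M)$.

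For small scales $r<r_0$ I would use the lift and the propagation of the doubling index. The comparison inequalities give $N(u_\lambda,B(x,r))\le\tfrac{\sqrt\lambda\,r}{\log 2}+N(\widehat u,\widehat B((x,0),r))$; by almost‑monotonicity $N(\widehat u,\widehat B((x,0),r))\le C\,N(\widehat u,\widehat B((x,0),r_0))$, and the latter is $\le C\sqrt\lambda$ since $2^{N(\widehat u,\widehat B((x,0),r_0))}=\sup_{\widehat B((x,0),2r_0)}|\widehat u|/\sup_{\widehat B((x,0),r_0)}|\widehat u|\le e^{C\sqrt\lambda}\max_M|u_\lambda|/\big(c_M e^{-C\sqrt\lambda}\max_M|u_\lambda|\big)$ by the global upper bound and the chain estimate above. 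This covers all scales. The main obstacle is the elliptic input of the second paragraph: proving the three‑balls inequality and the frequency almost‑monotonicity on a \emph{curved} manifold, where in geodesic normal coordinates $\Delta_{\widehat g}$ is a perturbation of the flat Laplacian and one must show the curvature error terms in $\tfrac{d}{ds}\log N(v,\widehat B(p,s))$ are integrable in $s$, so that only a bounded multiplicative constant is lost; equivalently, one constructs a suitable radial Carleman weight for $\Delta_{\widehat g}$ in the spirit of Donnelly--Fefferman. Everything else — the lift, the comparison inequalities, and the compactness/chaining — is routine.
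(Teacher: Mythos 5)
The paper does not prove Theorem~\ref{thm:df-growth}: it is recalled as a classical tool in the ``Tool box'' section and cited to Donnelly--Fefferman~\cite{don-fef88}, so there is no in-paper proof to compare your attempt against.

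That said, your outline is a coherent version of one of the standard routes to the estimate. The lift $\widehat u(x,t)=u_\lambda(x)e^{\sqrt\lambda t}$ to $M\times\R$ is exactly the same device the paper uses for Corollary~\ref{cor:remez-eigen} and Corollary~\ref{cor:gradient-eigen}; the two-sided sup comparison on $\widehat B((x,0),\rho)$ with the $e^{\sqrt\lambda\rho}$ factor is correct; the chaining with the three-balls inequality at a fixed scale $r_0$ does yield $\sup_{B(x,r_0)}|u_\lambda|\ge c\,e^{-C\sqrt\lambda}\max_M|u_\lambda|$ with $\lambda$-independent $c,C$ because the chain length depends only on $\mathrm{diam}(M)/r_0$; and propagating the doubling index down to small scales via almost-monotonicity of the frequency is the right mechanism. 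One small imprecision: frequency almost-monotonicity gives a bound of the form $N(\widehat u,\widehat B(p,s))\le C\,N(\widehat u,\widehat B(p,r_0))+C$ rather than the purely multiplicative inequality you wrote; this is harmless here, since the additive term is absorbed into $C_1\sqrt\lambda$, but worth being explicit about since it reflects the fact that the sup-norm doubling index and the $L^2$ frequency are only comparable up to additive constants.

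Where the substance lies is, as you flag yourself, the elliptic input: frequency almost-monotonicity (hence the three-balls inequality and doubling propagation) for $\Delta_{\widehat g}$ on a curved background with constants depending only on $(M,g)$. Your sketch treats that as a black box. Historically, Donnelly and Fefferman established the growth bound via Carleman estimates rather than the frequency function, so your plan is closer in spirit to the Garofalo--Lin / Logunov--Malinnikova presentations; both routes give the same statement. In a full writeup you would need to either cite that elliptic machinery precisely or reproduce it, since it is the only nontrivial ingredient once the lift and the chaining are in place.
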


Another tool we apply is the following Remez-type inequality. It is a version of the three balls inequality for wild sets. This is a recent result, due to Logunov and Malinnikova,  which was conjectured by Landis and Donnelly and Fefferman, who gave an idea on how such growth bounds can be used for the study of nodal sets~\cite{don-fef90}.
\begin{theorem}[Remez-type inequality for solutions of elliptic equations~\cite{logu-mali-icm}]\label{thm:remez-elliptic}
    Let $h$ be a solution to a linear elliptic equation in divergence form 
    $\Div (A\nabla h) = 0$ in a ball~$2B\subseteq\Rb^d$. 
    For any measurable subset $E\subset B$
    $$\sup_B |h|\leq C_1\sup_E |h| \left(C_1\frac{\Vol(B)}{\Vol(E)}\right)^{C_1 N(h, B)}\ ,$$
    for some $C_1=C_1(A)>0$.
\end{theorem}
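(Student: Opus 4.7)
The plan is to reduce the multivariate inequality to the classical one-dimensional Remez inequality for polynomials, exploiting the heuristic that an $A$-harmonic function with doubling index $N$ behaves like a polynomial of degree $O(N)$ along generic lines. After rescaling so that $B=B(0,1)$, write $\eta=\Vol(E)/\Vol(B)$; the target becomes $\sup_B |h|\leq C(C/\eta)^{CN}\sup_E |h|$ with $C$ depending only on the ellipticity of $A$.

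First I would pick a good line. Covering $B$ by segments parallel to a fixed direction $\omega$, Fubini gives $\int_{\omega^\perp}|E\cap\ell_{y,\omega}|\,dy=\Vol(E)$ and analogously for $B$. Averaging over $\omega\in S^{d-1}$ and applying Markov's inequality, one can select a segment $\ell$ passing close to a point realizing $\sup_B |h|$, and satisfying $|\ell\cap E|/|\ell\cap B|\gtrsim\eta$; in particular $\sup_{\ell\cap B}|h|$ remains comparable to $\sup_B |h|$. Next I would show that $h|_\ell$ is well approximated by a polynomial $P$ of degree $\lesssim N$ on $\ell\cap B$, in the sense that $\sup_{\ell\cap B}|h-P|\leq\tfrac12\sup_{\ell\cap B}|h|$. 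This is proved by a Taylor-like expansion at the center of $B$, the higher-order remainder being controlled by Garofalo--Lin monotonicity of the generalized frequency together with iterated three-balls inequalities for divergence-form equations.

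The classical one-dimensional Remez inequality $\sup_{[-1,1]}|P|\leq(4/|E'|)^{\deg P}\sup_{E'}|P|$, applied to $P$ on $\ell$ with $E'=\ell\cap E$, then yields $\sup_{\ell\cap B}|P|\leq(C/\eta)^{CN}\sup_{\ell\cap E}|P|$. Passing from $P$ to $h$ via the polynomial approximation, and then from the line $\ell$ back to the ball $B$ using the choice made above, concludes the proof after absorbing constants.

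The hard part will be the polynomial approximation: quantifying the sense in which solutions of divergence-form equations with merely measurable coefficients resemble polynomials of degree $N$. For harmonic functions one can use spherical harmonic expansions, but in the general elliptic setting no such basis exists, so the approximation must be engineered through propagation-of-smallness and frequency monotonicity arguments. A secondary difficulty is to arrange the bookkeeping so that the exponent in the final bound remains linear in $N$; any step that inadvertently multiplies $N$ by $\log(1/\eta)$ would degrade the inequality. An alternative route would be to dispense with the polynomial altogether and prove the one-dimensional Remez inequality for $h|_\ell$ directly by iterating a three-interval elliptic inequality along the segment, which is more robust but less transparent.
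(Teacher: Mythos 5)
You should first be aware that the paper offers no proof of Theorem~\ref{thm:remez-elliptic}: it is quoted as a black box from Logunov--Malinnikova, so your proposal has to stand on its own as a proof of their theorem, which is a deep result whose actual proof (a stopping-time argument over a hierarchy of subcubes, controlling the distribution of doubling indices and iterating three-ball inequalities, with base case the propagation of smallness from a set of almost full relative measure) looks nothing like a reduction to the one-dimensional polynomial Remez inequality.

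The fatal gap is quantitative and sits exactly at the junction you flag as ``the hard part'', but it is worse than you think. Suppose you do produce $P$ with $\deg P\lesssim N$ and $\sup_{\ell\cap B}|h-P|\le\tfrac12\sup_{\ell\cap B}|h|$. To feed the one-dimensional Remez inequality you must bound $\sup_{\ell\cap E}|P|$, and all you get is $\sup_{\ell\cap E}|P|\le\sup_E|h|+\tfrac12\sup_{\ell\cap B}|h|$. In the only regime where the theorem has content, $\sup_E|h|$ is exponentially small compared with $\sup_B|h|$ --- the conclusion permits $\sup_E|h|\sim(\Vol(E)/(C\Vol(B)))^{CN}\sup_B|h|$ --- so the additive error $\tfrac12\sup_{\ell\cap B}|h|$ swamps $\sup_E|h|$ entirely, and the chain of inequalities closes into the tautology $\sup_B|h|\lesssim(C/\eta)^{CN}\sup_B|h|$. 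To avoid this you would need $|h-P|\lesssim\sup_E|h|$ on $\ell\cap E$, i.e.\ an approximation error exponentially small in $N\log(1/\eta)$ relative to $\sup_B|h|$, still with $\deg P=O(N)$. Even for genuinely harmonic functions, where the spherical-harmonics tail estimate gives error $\le 2^{-(C-1)N}\sup_B|h|$ for a degree-$CN$ truncation, matching the required threshold forces $\deg P\gtrsim N\log(1/\eta)$ and hence a final exponent $CN\log(C/\eta)$ --- precisely the degradation you identify as unacceptable, and strictly weaker than the stated theorem. For general divergence-form $A$ the situation is hopeless: there is no polynomial basis, Garofalo--Lin frequency monotonicity yields doubling and three-ball inequalities but no polynomial approximation, and invoking ``propagation of smallness'' to build the approximant is circular since that is the theorem being proved. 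Your fallback --- a three-interval inequality for $h|_\ell$ iterated along the segment --- fails for the same structural reason: the restriction of a solution of a $d$-dimensional elliptic equation to a line satisfies no one-dimensional equation, so no such interval inequality is available.
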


We will apply the immediate corollary below, adapted for eigenfunctions.
\begin{corollary}[Remez-type inequality for eigenfunctions in small scales]\label{cor:remez-eigen}
Let $(M, g)$ be a closed Riemannian manifold. Let $u_\lambda$ satisfy the eigenvalue equation $-\Delta_g u_\lambda = \lambda u_\lambda$ in a geodesic ball $2B=B(x, 2r)$ with $r<\lambda^{-1/2}$. Then,
 $$\sup_B |u_\lambda|\leq C_R\sup_E |u_\lambda| \left(C_R\frac{\Vol(B)}{\Vol(E)}\right)^{C_R N(u_\lambda, B)+C_R}\ ,$$
 for some $C_R=C_R(M, g)>0$.
\end{corollary}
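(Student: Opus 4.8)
The plan is to eliminate the zeroth-order term of the eigenvalue equation by the classical harmonic lift, thereby reducing the statement to Theorem~\ref{thm:remez-elliptic} (applied in dimension $d+1$), and then to check that the three quantities occurring there --- the sup-norms, the doubling index, and the volume ratio --- change only by factors controlled by $(M,g)$ under this reduction. The hypothesis $r<\lambda^{-1/2}$ enters exactly where these factors must be made independent of~$\lambda$.

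First I would pass to the product manifold $\widetilde M:=M\times\Rb$ with the product metric $\widetilde g:=g\oplus\dif t^2$ and set $v(x,t):=u_\lambda(x)\cosh(\sqrt\lambda\,t)$. Since $\Delta_{\widetilde g}=\Delta_g+\partial_t^2$ and $\partial_t^2\cosh(\sqrt\lambda\,t)=\lambda\cosh(\sqrt\lambda\,t)$, the equation $-\Delta_g u_\lambda=\lambda u_\lambda$ gives $\Delta_{\widetilde g}v=0$, so $v$ is $\widetilde g$-harmonic, $v(\cdot,0)=u_\lambda$, and, because $r<\lambda^{-1/2}$, one has $1\le\cosh(\sqrt\lambda\,t)\le\cosh 2$ for $|t|\le 2r$. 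Next I would work in the chart $\Psi(y,t):=(\exp^M_{x_0}y,\,t)$, which is bi-Lipschitz onto its image with constants depending only on $(M,g)$ on a Euclidean ball of radius comparable to $\inj(M)$, and in which $v$ solves a divergence-form equation $\Div(A\nabla v)=0$ with $A:=\sqrt{\det\widetilde g}\,\bigl(\widetilde g^{\alpha\beta}\bigr)$ uniformly elliptic, the ellipticity depending only on $(M,g)$. (We may assume $r$ is small enough that the Euclidean balls used below lie well inside the good chart; the complementary range of~$r$, equivalently bounded~$\lambda$, reduces to this case by a routine covering argument.) I would then apply Theorem~\ref{thm:remez-elliptic} to~$v$ on the Euclidean ball $\mathbf B$ of radius~$r$ centered at the origin --- so that $2\mathbf B$ has radius $2r$, matching $2B$ --- with the set $\mathbf E:=\Psi^{-1}(E\times\Rb)\cap\mathbf B$.

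It then remains to translate the resulting inequality back to~$M$. Since $\Psi$ maps the slice $\{t=0\}$ of the Euclidean $\rho$-ball onto $B(x_0,\rho)\times\{0\}$ and maps the whole $\rho$-ball into the cylinder $B(x_0,\rho)\times(-\rho,\rho)$, the pinching $1\le\cosh\le\cosh 2$ gives $\sup_B|u_\lambda|\le\sup_{\mathbf B}|v|\le\cosh(1)\sup_B|u_\lambda|$ and $\sup_{2B}|u_\lambda|\le\sup_{2\mathbf B}|v|\le\cosh(2)\sup_{2B}|u_\lambda|$, hence $N(v,\mathbf B)\le N(u_\lambda,B)+\log_2\cosh 2$. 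On $\mathbf E$ one has $|v|\le\cosh(1)\sup_E|u_\lambda|$, while the bi-Lipschitz bound for $\exp^M_{x_0}$ together with a rearrangement estimate for the weight $\sqrt{r^2-|y|^2}$ (which governs the $t$-extent of $\mathbf E$ above a point $y$) yields $\Vol(\mathbf B)/\Vol(\mathbf E)\le C(M,g)\bigl(\Vol(B)/\Vol(E)\bigr)^{3/2}$. Substituting these into the conclusion of Theorem~\ref{thm:remez-elliptic} and absorbing the bounded factors, the additive constant in the doubling index, and the power~$3/2$ into a single $C_3=C_3(M,g)$ gives the claim.

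The only step needing genuine (if modest) care is this last bookkeeping: one must check that passing from $u_\lambda$ to $v$ and then to coordinates inflates the doubling index only by an additive $O(1)$ and the volume ratio only by a bounded power, so that the constant $C_2(A)$ furnished by Theorem~\ref{thm:remez-elliptic}, depending only on the ellipticity of~$A$, becomes one depending only on $(M,g)$ --- this is precisely the role of $r<\lambda^{-1/2}$, through the pinching of $\cosh(\sqrt\lambda\,t)$. The additive loss in the doubling index is harmless when $N(u_\lambda,B)$ is bounded below, which is the only regime in which the corollary will be used; the remaining small-$N$ case is classical (it reduces, via Harnack's inequality at scale $r<\lambda^{-1/2}$, to the sign-definite situation).
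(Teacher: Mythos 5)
Your argument is, at heart, the same lifting trick the paper uses: pass to $M\times\Rb$, turn the eigenfunction into a solution of a divergence-form elliptic equation via a hyperbolic multiplier, apply Theorem~\ref{thm:remez-elliptic}, and translate back. The paper takes $h=e^{\sqrt\lambda t}u_\lambda$ where you take $\cosh(\sqrt\lambda t)u_\lambda$, but on the strip $|t|\le 2r<2\lambda^{-1/2}$ both multipliers are pinched between absolute constants, so this is cosmetic. Your write-up is actually more careful than the paper's in one respect: you correctly observe that the doubling index changes by an \emph{additive} $O(1)$ and that the volume ratio inflates by at most a fixed power, whereas the paper asserts a two-sided multiplicative comparison $cN(u_\lambda,B)\le N(h,B^{d+1})\le CN(u_\lambda,B)$ which, as stated, fails when $N(u_\lambda,B)$ is close to zero (take $u_\lambda$ radially decreasing). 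You flag this residual small-$N$ regime and gesture at Harnack; this is the one place you and the paper both leave a gap, but it is harmless here because in every application in \S\ref{sec:innerradius} the volume ratio is bounded (by~$2$), so the additive loss in $N$ only costs a multiplicative constant. In short: correct, same approach, with somewhat tighter bookkeeping of constants than the source.
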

\begin{proof}
    Consider the harmonic function $h(t, x):=e^{t\sqrt{\lambda}} u_\lambda(x)$ defined on the~$d+1$ dimensional ball
    $B^{d+1}((0, x), 2r)$ in the product Riemannian manifold $\Rb\times M$.
    Then, $$N(u_\lambda, B(x, r))-C\leq N(h, B^{d+1}((0, x), r))\leq N(u_\lambda, B(x, r))+C$$ 
    Set $E^{d+1}=\{(t, y)\in B^{d+1}((0, x), r) |\ y\in E\}$.
    Apply Theorem~\ref{thm:remez-elliptic} to $h$ with~$E$ replaced by $E^{d+1}$ and $B$ replaced by $B^{d+1}$.
\end{proof}

The following ingredient in our proof, which is due to Georgiev and Mukherjee, guarantees a large almost inscribed ball in a nodal domain, centered at a maximum point of the eigenfunction in the nodal domain.
\begin{theorem}[An almost inscribed ball \cite{geor-mukh}]\label{thm:capacity}
Let $(M, g)$ be a closed Riemannian manifold of dimension~$d$ at least three. Let $u_\lambda$ be an eigenfunction and  $\Omega$ a corresponding nodal domain.  Let~$x_{\max}\in\Omega$ be a point where
$|u_\lambda(x_{\max})|=\max_\Omega |u_\lambda|$. Then, for  any geodesic ball $B_\delta=B(x_{\max}, \delta\lambda^{-1/2})$
$$\frac{\Vol(B_\delta\setminus\Omega)}{\Vol(B_\delta)}\leq C_\out\delta^{2d/(d-2)}$$
for some $C_\out=C_\out(M, g)>0$.
\end{theorem}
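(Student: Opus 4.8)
The plan is to reprove Theorem~\ref{thm:capacity} using only the maximum principle, the resolvent identity for Dirichlet eigenfunctions, and classical potential theory, thereby dispensing with the Brownian motion of~\cite{geor-mukh} (throughout $d\ge3$, so that Newtonian capacity is available). Normalise so that $u_\lambda>0$ on~$\Omega$ and $u_\lambda(x_{\max})=\sup_\Omega u_\lambda=1$, and write $r:=\delta\lambda^{-1/2}$ and $K:=\overline{B_\delta\setminus\Omega}$; we may assume $r$ is smaller than a fixed fraction of $\inj(M)$, since otherwise the asserted bound holds trivially after enlarging~$C_4$. Fix an auxiliary geodesic ball $\tilde B=B(x_{\max},R_0)$ with $R_0$ a fixed small multiple of $\inj(M)$, so that $K\subset B_\delta\subset\tilde B$ and $K\cap\Omega=\emptyset$. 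The single structural input is the classical fact that $u_\lambda$ restricted to~$\Omega$ lies in $H^1_0(\Omega)$ and is a Dirichlet eigenfunction of~$\Omega$ with eigenvalue~$\lambda$.

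\textbf{Step 1: the centre absorbs almost all the heat.} Pick a parameter $s>0$, of size a small fixed multiple of $r^{-2}=\lambda\delta^{-2}$, and set $\mu=\lambda+s$. From $(-\Delta_\Omega+s)u_\lambda=\mu u_\lambda$, applying the positivity-preserving operator $(-\Delta_\Omega+s)^{-1}$ and using $0<u_\lambda\le1$ on~$\Omega$ one obtains at the centre
\[
1=u_\lambda(x_{\max})=\mu\!\int_\Omega G^\Omega_s(x_{\max},y)\,u_\lambda(y)\,\dif y\ \le\ \mu\!\int_\Omega G^\Omega_s(x_{\max},y)\,\dif y\ =\ \mu\,v_s(x_{\max}),
\]
where $G^\Omega_s\ge0$ is the Green kernel of $-\Delta+s$ on~$\Omega$ and $v_s$ solves $(-\Delta+s)v_s=1$ in~$\Omega$, $v_s=0$ on~$\partial\Omega$, with $0\le v_s\le1/s$. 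Hence the nonnegative deficit $h_s:=1/s-v_s$, which satisfies $(-\Delta+s)h_s=0$ in~$\Omega$ and $h_s\equiv1/s$ on~$\partial\Omega$, obeys $h_s(x_{\max})\le\tfrac1s-\tfrac1\mu=\tfrac{\lambda}{s(s+\lambda)}$. That is, because $u_\lambda$ attains its maximum at the centre, the $s$-harmonic extension of the constant~$1$ from $\partial\Omega$ is forced to be small at~$x_{\max}$.

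\textbf{Step 2: comparison with a capacitary potential.} Let $\psi_K$ be the capacitary potential of~$K$ in~$\tilde B$ for the operator $-\Delta+s$, i.e.\ $(-\Delta+s)\psi_K=0$ in $\tilde B\setminus K$, $\psi_K\equiv1$ on~$K$, $\psi_K\equiv0$ on~$\partial\tilde B$, $0\le\psi_K\le1$. On $\Omega\cap\tilde B$ both $s\,h_s$ and $\psi_K$ solve $(-\Delta+s)w=0$ (here $K\cap\Omega=\emptyset$ is used), and on the boundary $s\,h_s\ge\psi_K$ (on $\partial\Omega$ because $s h_s\equiv1\ge\psi_K$, on $\partial\tilde B$ because $\psi_K\equiv0$); since $s>0$ the maximum principle gives $\psi_K\le s\,h_s$ on $\Omega\cap\tilde B$, hence
\[
\psi_K(x_{\max})\ \le\ s\,h_s(x_{\max})\ \le\ \frac{\lambda}{s+\lambda}\ \le\ C\,\delta^{2}.
\]
For the matching lower bound I would represent $\psi_K$ as the Green potential $\int_K G^{\tilde B}_s(x_{\max},\cdot)\,\dif\nu_K$ of its equilibrium measure $\nu_K$, whose total mass is the capacity of~$K$; since $s^{-1/2}\gtrsim r$, the Green function obeys $G^{\tilde B}_s(x_{\max},y)\gtrsim r^{2-d}$ for every $y\in K\subset B(x_{\max},r)$, so that $\psi_K(x_{\max})\gtrsim r^{2-d}\CAP(K)$, where $\CAP(K)$ is the Euclidean Newtonian capacity of~$K$ read off in a coordinate chart (the capacity relative to $\tilde B$ for $-\Delta+s$ being comparable to it, up to a constant depending only on $(M,g)$).

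\textbf{Step 3: from capacity to volume.} Combining the two bounds of Step~2 gives $\CAP(K)\le C\,\delta^{2}r^{d-2}$; the isocapacitary inequality $\CAP(K)\ge c_d\,\Vol(K)^{(d-2)/d}$ then yields $\Vol(B_\delta\setminus\Omega)=\Vol(K)\le C\,(\delta^{2}r^{d-2})^{d/(d-2)}=C\,\delta^{2d/(d-2)}r^{d}$, and dividing by $\Vol(B_\delta)\asymp r^{d}$ finishes the argument. The main obstacle is the lower bound in Step~2: producing the \emph{sharp} estimate $\psi_K(x_{\max})\gtrsim r^{2-d}\CAP(K)$ is precisely where the Brownian hitting-probability input of~\cite{geor-mukh} gets traded for potential theory, and it requires the equilibrium-measure representation together with two-sided Green function (equivalently heat-kernel) estimates for $-\Delta+s$ on~$(M,g)$, plus the correct calibration of scales — choosing $s\asymp\lambda\delta^{-2}$ so that the killing length $s^{-1/2}$ matches the radius~$r$ of $B_\delta$. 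Finally, it is worth noting that the whole improvement of the exponent from the naive $\delta^{2}$ to $\delta^{2d/(d-2)}$ is exactly the content of the isocapacitary inequality: a thin set carries far more capacity than its volume alone would suggest.
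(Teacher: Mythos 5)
Your proposal is correct and takes a genuinely different route from the paper's. The paper works parabolically: it estimates the heat flow $\psi_{K,U}(t,x_{\max})$ at time $t=\delta^2\lambda^{-1}$ by combining the Grigor'yan--Saloff-Coste capacity bound (Theorem~\ref{thm:capacity-upper-bound}), a majorization by the eigenfunction heat flow, and a trio of heat-kernel estimates (Li--Yau Gaussian upper bound, Cheeger--Yau lower bound, Norris's principle of not feeling the boundary), the latter three being needed to handle the fact that $\Omega$ need not lie inside~$U$. Your elliptic version replaces the heat kernel $p_U(t,\cdot,\cdot)$ by the $s$-Green function $G^{\tilde B}_s$ at the Laplace-conjugate parameter $s\asymp t^{-1}=\lambda\delta^{-2}$, and the resolvent identity $u_\lambda=(\lambda+s)(-\Delta_\Omega+s)^{-1}u_\lambda$ compresses the eigenfunction-majorization step into a single line, giving $sh_s(x_{\max})\leq\lambda/(\lambda+s)\lesssim\delta^2$ with no heat-kernel input at all. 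Your maximum-principle comparison $\psi_K\leq sh_s$ on $\Omega\cap\tilde B$ is also genuinely more economical than inequality~\eqref{ineq:omega-uminusK-comparison}: since $sh_s\equiv1$ on all of $\partial\Omega$ and $\psi_K\equiv0$ on $\partial\tilde B$, the boundary values match up without ever estimating the heat leaking out of $U$, and both the Li--Yau upper bound and the not-feeling-the-boundary estimate used in Proposition~\ref{prop:temp-upper-bound-no-lambda} drop out of the argument entirely. The two proofs converge exactly at the step you flag as the main obstacle, the lower bound $\psi_K(x_{\max})\gtrsim r^{2-d}\CAP(K)$: in your elliptic picture this requires the equilibrium-measure representation together with the pointwise lower bound $G^{\tilde B}_s(x_{\max},y)\gtrsim r^{2-d}$ for $y\in K$, which is the elliptic shadow of the Cheeger--Yau lower bound underlying Proposition~\ref{prop:capacity-temperature}; the technical content there is comparable. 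One simplification available to you: the inequality you need between the $(-\Delta+s)$-capacity relative to $\tilde B$ and the Newtonian capacity, namely $\CAP_s(K,\tilde B)\geq\CAP(K)$, is automatic from the variational definitions (adding $s\int v^2$ to the energy and restricting the competitors to $C^1_c(\tilde B)$ can only raise the infimum), so the two-sidedness you worry about in Step~2 is not actually required --- only the one Green-function lower bound is.
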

\begin{remark} Lieb, Maz'ya and Shubin~\cites{lieb, mazya-shubin} proved the existence of an almost inscribed ball of radius~$\sim\lambda^{-1/2}$ in a bounded domain in the Euclidean domain with first Dirichlet eigenvalue~$\lambda$. Their arguments do not give information about its center.
\end{remark}

Finally, we recall the following classical gradient estimate for solutions of linear elliptic equations.
Let 
$$L = a^{ij}\partial_i\partial_j+b^i\partial_i+c $$
be a uniformly elliptic operator of second order in the  unit ball.
Assume that 
$$\|a^{ij}\|_{C^2(B_1)}+\|b^i\|_{C^1(B_1)}+\|c\|_{C(B_1)}\leq K$$
and that 
$$\forall \xi\in\Rb^d\quad a^{ij}\xi_i\xi_j \geq \Lambda |\xi|^2$$
for some $\Lambda>0$. We have
\begin{theorem}[A gradient estimate {\cite{gil-tru}*{Theorem 6.2}}]
\label{thm:gradient}
Let $h$ satisfy $Lh=0$ in the ball~$B_r=B(0, r)$ where $0<r<1$.
Then,
$$|\nabla h(0)|\leq \frac{C_2}{r}\sup_{B_r} |h|\ ,$$
for some $C_2=C_2(K, \Lambda, d)>0$.
\end{theorem}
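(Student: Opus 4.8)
The plan is to normalise the scale by a dilation and then quote interior elliptic regularity at unit scale, upgrading a zeroth-order bound to a first-order one via Sobolev embedding.

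\emph{Scaling reduction.} First I would pass to the unit ball. Given $h$ with $Lh=0$ in $B_r$, $0<r<1$, set $\tilde h(y):=h(ry)$ for $y\in B_1$. Then $\tilde h$ solves $\tilde L\tilde h=0$ in $B_1$, where $\tilde L=\tilde a^{ij}\partial_i\partial_j+\tilde b^i\partial_i+\tilde c$ with $\tilde a^{ij}(y)=a^{ij}(ry)$, $\tilde b^i(y)=r\,b^i(ry)$ and $\tilde c(y)=r^2 c(ry)$. Because $r<1$, differentiating these relations shows $\|\tilde a^{ij}\|_{C^2(B_1)}+\|\tilde b^i\|_{C^1(B_1)}+\|\tilde c\|_{C(B_1)}\le K$, while $\tilde a^{ij}$ keeps the same ellipticity constant $\Lambda$; moreover $\nabla\tilde h(0)=r\,\nabla h(0)$ and $\sup_{B_1}|\tilde h|=\sup_{B_r}|h|$. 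Hence it suffices to produce $C_5=C_5(K,\Lambda,d)$ with $|\nabla\tilde h(0)|\le C_5\sup_{B_1}|\tilde h|$, and dividing by $r$ yields the stated estimate. We may also assume $\sup_{B_r}|h|<\infty$, the inequality being trivial otherwise.

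\emph{Estimate at unit scale.} Drop the tildes. Since $h$ is a classical (hence, by the bootstrap using $a^{ij}\in C^2$, $W^{2,p}_{\mathrm{loc}}$) solution bounded on $B_1$, I would view the equation in the form $a^{ij}\partial_i\partial_j h=-b^i\partial_i h-c h$ with vanishing right-hand side and apply the interior $L^p$ (Calderón–Zygmund) estimate for non-divergence elliptic operators with continuous leading coefficients \cite{gil-tru} on the pair $B_{1/2}\subset\subset B_1$, for a fixed $p>d$:
$$\|h\|_{W^{2,p}(B_{1/2})}\le C(K,\Lambda,d,p)\,\|h\|_{L^p(B_1)}\le C'(K,\Lambda,d,p)\,\sup_{B_1}|h|.$$
The Sobolev embedding $W^{2,p}(B_{1/2})\hookrightarrow C^1(\overline{B_{1/2}})$ then gives
$$|\nabla h(0)|\le \|h\|_{C^1(\overline{B_{1/2}})}\le C''(d,p)\,\|h\|_{W^{2,p}(B_{1/2})}\le C_5\,\sup_{B_1}|h|,$$
which completes the argument modulo the scaling step above.

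\emph{Where the work is.} There is no conceptual obstacle; the one point requiring care is purely bookkeeping. One must verify that the constant coming out of the interior $L^p$ estimate is controlled solely by $d$, $p$, the ellipticity constant $\Lambda$, an $L^\infty$ bound on $b^i$ and $c$, and a modulus of continuity of $a^{ij}$ — all dominated by $K$ after rescaling, uniformly in $r<1$, because dilation by $r<1$ does not increase any of the relevant coefficient norms. That $c$ is assumed only continuous (not Hölder) is harmless on this route, since Calderón–Zygmund theory needs only boundedness of the lower-order coefficients; if one prefers to stay inside Schauder theory, the same conclusion follows from the interior $C^{1,\alpha}$ estimate after absorbing $c\,h$ into the right-hand side. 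Finally, granting a little extra coefficient regularity (or a routine approximation), a self-contained Bernstein argument — applying the maximum principle for $L$ to $\zeta^2|\nabla h|^2+\mu\,h^2$ with a cutoff $\zeta$ localised in $B_r$ and $\mu=\mu(K,\Lambda,d)$ chosen large — yields the bound directly, which is presumably why the hypotheses are phrased with $a^{ij}\in C^2$ and $b^i\in C^1$.
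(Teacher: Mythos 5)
Your proposal is correct. Note that the paper offers no proof of this statement at all: it is quoted verbatim as a classical interior estimate with the reference to Gilbarg--Trudinger, Theorem~6.2, whose weighted interior norms $|u|^{*}_{1}\geq d_{x}|Du(x)|$ already encode the factor $r$ at the center, so the inequality drops out of that theorem in one line. What you do instead is rederive it: the dilation $\tilde h(y)=h(ry)$ is set up correctly (the coefficient norms can only decrease under rescaling by $r<1$, ellipticity is unchanged, and $\nabla\tilde h(0)=r\nabla h(0)$), and at unit scale the interior $W^{2,p}$ estimate for $p>d$ plus Sobolev embedding gives the pointwise gradient bound with a constant controlled by $K$, $\Lambda$, $d$. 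One genuine advantage of your route is that the Calder\'on--Zygmund estimate only needs $b^{i},c$ bounded and $a^{ij}$ uniformly continuous, so the hypothesis that $c$ is merely continuous (not H\"older) causes no friction, whereas invoking the Schauder estimate literally would require a H\"older modulus for $c$ (or a small extra argument absorbing $ch$ into the right-hand side). The only cosmetic slip is the phrase ``with vanishing right-hand side'' after you move the lower-order terms across the equation; what you mean, and what you actually use, is that the inhomogeneity $f$ in $Lh=f$ is zero, which is fine.
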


Adjusted for eigenfunctions using the same lifting trick as in the proof of Corollary~\ref{cor:remez-eigen} we get
\begin{corollary}[A gradient estimates for eigenfunctions]\label{cor:gradient-eigen}
Let $(M, g)$ be a closed Riemannian manifold and let $u_\lambda$ satisfy $-\Delta_g u_\lambda = \lambda u_\lambda$ in a geodesic ball $B(x, r)$,
where $r<\lambda^{-1/2}$. Then,
$$ \sup_{B(x, r/2)}|\nabla u_\lambda|\leq \frac{C_\gr}{r}\sup_{B(x, r)} |u_\lambda|$$
for some $C_\gr=C_\gr(M, g)>0$.
\end{corollary}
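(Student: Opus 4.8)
The plan is to deduce this from the Euclidean gradient estimate Theorem~\ref{thm:gradient} via the lifting device already used in the proof of Corollary~\ref{cor:remez-eigen}. If $\lambda=0$ then $u_\lambda$ is constant and there is nothing to prove, so assume $\lambda>0$; then $\lambda\geq\lambda_1(M,g)$ and hence $r<\lambda^{-1/2}\leq\lambda_1(M,g)^{-1/2}$ is bounded by a constant depending only on~$(M,g)$ (and, as always, $B(x,2r)$ is a genuine geodesic ball).

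First I would set $h(t,y):=e^{\sqrt{\lambda}\,t}u_\lambda(y)$ on the product Riemannian manifold $\Rb\times M$. Since $\partial_t^2 e^{\sqrt{\lambda}t}=\lambda e^{\sqrt{\lambda}t}$ and $\Delta_g u_\lambda=-\lambda u_\lambda$ in $B(x,r)$, the function $h$ is harmonic for the product metric on $(-r,r)\times B(x,r)$, which contains the geodesic ball $B^{d+1}((0,x),r)$. Next I would work in geodesic normal coordinates on $\Rb\times M$ centered at $(0,x)$, in which the radial distance from the center is preserved; since $r$ is uniformly bounded, after a fixed rescaling of these coordinates (whose scale depends only on $(M,g)$) the ball $B^{d+1}((0,x),r)$ becomes a Euclidean ball of radius $<1$, the coefficients of the product metric have $C^2$-norm at most some $K=K(M,g)$, and $\Delta_{\Rb\times M}$ takes the form $L=a^{ij}\partial_i\partial_j+b^i\partial_i$ with ellipticity constant at least some $\Lambda=\Lambda(M,g)>0$. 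Here compactness of $M$ and smoothness of $g$ are used.

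Then I would apply Theorem~\ref{thm:gradient} to $h$ on $B^{d+1}((0,x),r)$, which gives $|\nabla h(0,x)|\leq(C_5/r)\sup_{B^{d+1}((0,x),r)}|h|$ with $C_5=C_5(M,g)$ (the bound, being of the scale-invariant form $r^{-1}\sup$, is insensitive to the rescaling above). On the right, $|h(t,y)|=e^{\sqrt{\lambda}t}|u_\lambda(y)|\leq e^{\sqrt{\lambda}r}\sup_{B(x,r)}|u_\lambda|\leq e\sup_{B(x,r)}|u_\lambda|$, since $\sqrt{\lambda}\,r<1$. On the left, because the product metric splits orthogonally, $|\nabla_{\Rb\times M}h(0,x)|^2=\lambda\,u_\lambda(x)^2+|\nabla_g u_\lambda(x)|^2\geq|\nabla_g u_\lambda(x)|^2$. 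Hence $|\nabla u_\lambda(x)|\leq(eC_5/r)\sup_{B(x,r)}|u_\lambda|$. To upgrade this pointwise bound at the center to the supremum over $B(x,r/2)$ claimed in the statement, I would run the same argument at an arbitrary $x'\in B(x,r/2)$ with radius $r/2$ in place of $r$: then $B(x',r/2)\subseteq B(x,r)$ and $r/2<\lambda^{-1/2}$, so $|\nabla u_\lambda(x')|\leq(2eC_5/r)\sup_{B(x,r)}|u_\lambda|$, and taking the supremum over such $x'$ gives the corollary with $C_6=2eC_5$.

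I do not expect a genuine obstacle here; the one step requiring care is the uniformity in the second paragraph — namely that a single (rescaled) normal-coordinate construction works for every center $(0,x)$ and every admissible $r$, with $K$ and $\Lambda$, and hence $C_5$ and ultimately $C_6$, depending only on $(M,g)$. That bookkeeping is precisely what the phrase ``the same lifting trick'' in the statement is tacitly invoking.
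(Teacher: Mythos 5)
Your proof is correct and follows exactly the route the paper intends: the paper's ``proof'' is the single sentence pointing to the lifting trick of Corollary~\ref{cor:remez-eigen}, and you have spelled out that lift, the application of Theorem~\ref{thm:gradient} in normal coordinates, the bound $e^{\sqrt{\lambda}t}\leq e$ via $\sqrt{\lambda}\,r<1$, the orthogonal splitting of $|\nabla h|$, and the re-centering step that upgrades the pointwise estimate at $x$ to the supremum over $B(x,r/2)$. No discrepancy with the paper's approach.
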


\section{Proof of the main Theorem~\ref{thm:innerradius}}\label{sec:innerradius}
Let $\Omega_\lambda$ be a nodal domain of $u_\lambda$ and $x_{\max}$ be a point where $|u_\lambda|$ attains its maximum in~$\Omega_\lambda$. We  re-normalize $u_\lambda$ so that $u_\lambda(x_{\max}) =1$.

Identify the geodesic ball $B(x_{\max}, r_\inj)$ by means of the exponential map at~$x_{\max}$ with an open ball of radius~$r_\inj$ in Euclidean space, centered at~$0$. Assume that~$\lambda>1$  and let~$B_\delta$ be a ball of radius $\delta\lambda^{-1/2}$ centered at $0$.   
\begin{lemma}
\label{lem:partition-not-too-fine2}
There exists $c_1=c_1(M, g)>0$ such that
if $A\leq c_1 \delta^{-2/(d-2)}$ then for any ball $b \subset B_\delta$ of radius $\delta\lambda^{-1/2}/A $, $$ \frac{\Vol(b \cap \Omega_\lambda)}{\Vol(b)}\geq  \frac{1}{2}\ . $$
\end{lemma}
\begin{proof}
Otherwise, for any $c_1>0$, some $A$ as above and some ball~$b\subset B_\delta$ of radius $\delta\lambda^{-1/2}/A$ we get the following inequalities.
$$
\frac{\Vol(B_\delta\setminus
\Omega_\lambda)}{\Vol(B_\delta)}\geq\frac{\Vol(b\setminus\Omega_\lambda)}{\Vol(B_\delta)}>
\frac{1}{2}\frac{\Vol(b)}{\Vol(B_\delta)}\geq \frac{1}{2}c A^{-d} \geq \frac{1}{2}c c_1^{-d}\delta^{2d/(d-2)}\ .$$
The preceding inequality  contradicts Theorem~\ref{thm:capacity} for $c_1>0$ small enough depending on~$C_\out, M, g$ only.
\end{proof}

\begin{lemma}\label{lem:doublingremez}
 There exist $T>1, a>0$ which only depend on $(M,g)$, such that for any ball~$b \subset B_{r_\inj/2}$ which satisfies $\Vol(b \cap \Omega_\lambda)/\Vol(b) \geq 1/2$  and $\sup_b |u_\lambda| \geq T$ , it holds that 
    \begin{equation}
        \sup_{2b} |u_\lambda| \geq (\sup_{b} |u_\lambda|)^{1+a}\ .
    \end{equation}
\end{lemma}

\begin{proof}

    By Corollary \ref{cor:remez-eigen},    
    $$\sup_b |u_\lambda|\leq C_R\sup_{b \cap \Omega_\lambda} |u_\lambda| \left(C_R\frac{\Vol(b)}{\Vol( b \cap \Omega_\lambda)}\right)^{C_R N(u_\lambda, b)+C_R} \leq C_R (2C_R)^{C_R N(u_\lambda, b)+C_R}$$ 
    
    This implies that  $N(u_\lambda, b) \geq c'\log_2 \sup_b |u_\lambda| - c''\geq (c'\log_2 \sup_b |u_\lambda|)/2$, where
    the last inequality is true if $\log_2 \sup_b|u_\lambda| \geq T$, with~$T$ large enough.
   The preceding inequality amounts to
   $$\log_2{\frac{\sup_{2b} |u_\lambda|}{\sup_{b} |u_\lambda|}} \geq\frac{c'}{2}\log_2(\sup_b |u_\lambda|)\, $$ 
   or, $$\sup_{2b} |u_\lambda| \geq \sup_{b} |u_\lambda|^{1+c'/2}\ .$$
\end{proof}

We now apply Lemma~\ref{lem:doublingremez} iteratively on small balls in $B_\delta$ to find a ball with a large doubling index.
\begin{lemma}\label{lem:doublinginduction}
    Suppose $\sup_{B_{\delta/2}} |u_\lambda| > T$, where $T$ is as in Lemma~\ref{lem:doublingremez}. Set $A=c_1\delta^{-2/(d-2)}$. Then 
    $$N(u_\lambda, B_{\delta/2}) \geq e^{c_2A}\ .$$
\end{lemma}
\begin{proof}

Consider a longest sequence of balls $b_k \subset B_\delta$ with the following properties
\begin{enumerate}[label=(\roman*), align=left]
    \item Each ball $b_k$ is of radius $\delta \lambda^{-1/2} / A$.
\item The ball $b_1$ is centered at a point where $|u_\lambda|$ attains its maximum in the closure of $B_{\delta/2}$.
 \item The center of $b_{k+1}$ is located at a point where $|u_\lambda|$ attains its maximum in the closure of $2b_k$.
\end{enumerate}
From Lemma~\ref{lem:partition-not-too-fine2} we know that for all~$k$
$$ \frac{\Vol(b_k \cap \Omega_\lambda)}{\Vol(b_k)} \geq  \frac{1}{2}\ ,$$
 and so, we may apply Lemma \ref{lem:doublingremez} to each ball $b_k$ to obtain that 
$$\sup_{b_{k+1}} |u_\lambda|\geq \sup_{2b_k} |u_\lambda|\geq \sup_{b_k} |u_\lambda|^{1+a}  \, .$$ 
In particular, we can find  at least $\lfloor(\delta\lambda^{-1/2}/2)/(2\delta\lambda^{-1/2}/A)\rfloor=\lfloor A/4 \rfloor$ distinct balls in the sequence. It follows
\begin{equation*}
    \sup_{B_\delta} |u_\lambda|\geq \sup_{b_{\lfloor A/4\rfloor}} |u_\lambda|\geq \sup_{b_1} |u_\lambda|^{(1+a)^{\lfloor A/4 \rfloor}}
     \geq  \sup_{B_{\delta/2}}|u_\lambda|^{{(1+a)^{\lfloor A/4 \rfloor}}} \ .
\end{equation*}
Finally,
$$N(u_\lambda, B_{\delta/2})\geq {(1+a)^{\lfloor A/4 \rfloor}-1}\geq \me^{c A}\ .$$
\end{proof}

The preceding lemma shows fast growth in the ball $B_\delta$. When confronted with the Donnelly-Fefferman growth bound in Theorem~\ref{thm:df-growth} we obtain a bound on the size of $u_\lambda$.
\begin{proposition}\label{prop:maximaldoublingindex}
There exists $c_3=c_3(M, g)>0$ such that if  
$\delta=c_3(\log\lambda)^{-\frac{d-2}{2}}$, 
then $$\sup_{B_{\delta/2}} |u_\lambda|<T\ .$$
\end{proposition}
\begin{proof}
Fix $c_3>0$ (and~$\delta$) so that for~$A$ as in Lemma~\ref{lem:doublinginduction} we have
$c_2 A> \log (C_{DF}\sqrt{\lambda})$. If $\sup_{B_{\delta/2}}|u_\lambda|\geq T$
we obtain from Lemma~\ref{lem:doublinginduction} that 
  $$N(u_\lambda, B_{\delta/2})> C_{DF}\sqrt{\lambda}\ ,$$
  contradicting Theorem~\ref{thm:df-growth}.
\end{proof}

With the preceding proposition we combine the gradient estimate to finish the proof of Theorem~\ref{thm:innerradius}.
\begin{proof}[Proof of Theorem~\ref{thm:innerradius}]
  Set $\delta$ as in Proposition~\ref{prop:maximaldoublingindex}. Then, $\sup_{B_{\delta/2}}|u_\lambda| \leq T$. By Corollary~\ref{cor:gradient-eigen}, 
  $\sup_{B_{\delta/4}} |\nabla u_\lambda| \leq \frac{2C_\gr T}{\delta}$. Since $u_\lambda(x_{\max}) = 1$, it follows that we can find  a  ball of radius~$\frac{\delta\lambda^{-1/2}}{c}$ where $u_\lambda$ is positive. 
\end{proof}

\section{Proof of Theorem~\ref{thm:capacity}: A capacity estimate and an almost inscribed ball}\label{sec:geor-mukh}
We  give a proof of Theorem \ref{thm:capacity}, due to Georgiev-Mukherjee~\cite{geor-mukh}. 
One can estimate the capacity of  a condenser $(B\setminus\Omega, B)$ by considering the heat flow in  
 $B\cap\Omega$ with boundary conditions set to one on $B\cap\partial\Omega$ and zero elsewhere,  while setting the initial condition to zero (see~\cite{grig-saloff}). The main idea in~\cite{geor-mukh} (cf. \cite{mang-inrad}*{\S3}) is then to compare the heat flow in~$B\cap\Omega$ with the heat flow in~$\Omega$ starting  from an eigenfunction, solved explicitly. One gets an explicit estimate of the heat at a maximum point of the eigenfunction in~$\Omega$.
 
 The actual implementation of this elegant idea in~\cite{geor-mukh} is done through Brownian motion techniques, and some points are only sketched. We  give here an implementation of this idea which is more oriented toward the PDE community. The tools used below  replacing the Brownian motion arguments are Li-Yau Gaussian upper bounds,
 Gaussian lower bounds and the principle of not feeling the boundary.

\subsection{Estimating the capacity via the heat flow}
In this section we recall Theorem~\ref{thm:capacity-upper-bound} due to Grigor'yan-Saloff-Coste~\cite{grig-saloff}*{Theorem 3.7},
which gives a way to estimate the capacity of a condenser by considering the heat flow in it and taking one measurement of temperature at a point. We only  treat the case of a smooth condenser, as the more general case, treated using the tools of Potential Theory (see~\cite{grig-saloff}), is not needed for our purpose.

\subsubsection{Recalling the heat equation and the notion of capacity}
Let $U\subset M$ be a chart with smooth boundary, and let $K\subset U$ be a compact set with smooth boundary.
The heating capacity $\CAP(K, U)$ of the condenser $(K, U)$  is the heat rejected per  unit time by the condenser  when the temperature  drops by one unit. It is defined 
(see~\cite{mazya}*{\S2.2.1} or~\cite{hein-kilp-mart}*{Ch. 2}) by
\begin{definition}[Capacity]
We set 
$$\CAP(K, U)=\inf\left\{\int_{U} |\nabla v|^2 \, dx\ :\, v\in C_c^1(U) \text{ and } v\geq 1 \text{ on } K\right\}\ .$$
\end{definition}
Let $\psi_{K, U}^{eq}$ be the solution to the Dirichlet problem
$$\left\{\begin{array}{ll}
  \Delta u=0, & \text{ in } U\setminus K,\\
  u=1,  & \text{ on } \partial K,\\ 
  u=0, &  \text{ on } \partial U.
\end{array}\right.$$
A classical variational argument and Green's identity show that
\begin{equation}\label{eq:capacity-flux}
    \CAP(K, U)=\int_{U\setminus K} |\nabla \psi_{K, U}^{eq}|^2\, \dif x =
-\int_{\partial K}  \partial_\nu \psi_{K, U}^{eq}\,\dif\sigma
\end{equation}
where $\nu$ is the outward unit normal with respect to $K$.

We let $\psi_{K, U}(t, x)$ denote the heat flow in the condenser $(K, U)$ with initial conditions set to zero, i.e., it solves 
$$\begin{cases}
     \partial_t u=\Delta_x u &\text{ in } \Rb_+\times (U\setminus K), \\
     u(0, x)=0 & \text{ for all } x\in U\setminus K, \\
     u(t, y)=0 & \text{ for all } y\in \partial U \text{ and } t>0,\\
     u(t, y)=1 & \text{ for all } y\in \partial K \text{ and } t>0.
\end{cases}
$$

We recall that for a smooth open subset $V\subset M$ a solution $u(t, x)$ to the heat equation with zero boundary condition and initial condition $u_0(x)$ is given by a smooth heat kernel, denoted by $p_V(t, x, y)$:
$$u(t, x)=\int_V p_V(t, x, y)u_0(y)\, dy\ .$$

\subsubsection{An upper bound on capacity via the heat flow}
The following theorem gives an upper bound on the capacity in terms of $\psi_{K, U}(t, x)$ and a heat kernel lower bound:
\begin{theorem}[{\cite{grig-saloff}*{Theorem 3.7}}]\label{thm:capacity-upper-bound}
$$\CAP(K, U) \int_0^t \inf_{y\in\partial K} p_{U}(s, x, y ) \,ds \leq \psi_{K, U}(t, x)$$
for all $x\in U\setminus K$ and $t>0$.
\end{theorem}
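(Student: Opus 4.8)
The plan is to prove Theorem~\ref{thm:capacity-upper-bound} by the maximum (comparison) principle, comparing the heat flow $\psi_{K,U}(t,x)$ in the condenser against a suitable subsolution built from the equilibrium potential $\psi^{eq}_{K,U}$ and the heat kernel of $U$. Concretely, I would first express $\psi_{K,U}$ via Duhamel's principle: since $\psi_{K,U}$ solves the heat equation in $U\setminus K$ with zero initial data and boundary value $1$ on $\partial K$, $0$ on $\partial U$, one can write it as an integral over time of the heat kernel of $U\setminus K$ against the ``flux'' density on $\partial K$. More useful here, though, is the standard representation obtained by subtracting the stationary solution: write $\psi_{K,U}(t,x) = \psi^{eq}_{K,U}(x) - w(t,x)$, where $w$ solves the heat equation in $U\setminus K$ with $w=0$ on $\partial K\cup\partial U$ and initial data $w(0,\cdot)=\psi^{eq}_{K,U}$. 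Then $w(t,x) = \int_{U\setminus K} p_{U\setminus K}(t,x,z)\,\psi^{eq}_{K,U}(z)\,dz \ge 0$, so in any case $\psi_{K,U}(t,x)\le \psi^{eq}_{K,U}(x)$, but that is the wrong direction; the real content is a lower bound on $\psi_{K,U}$.

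For the lower bound I would instead argue as follows. Fix $x\in U\setminus K$ and let $t>0$. Consider the function
$$
\Phi(t,x) := \int_0^t \left(\int_{\partial K} p_U(s,x,y)\,(-\partial_\nu \psi^{eq}_{K,U})(y)\,d\sigma(y)\right) ds\ .
$$
Using \eqref{eq:capacity-flux}, the inner integral is a weighted average of $p_U(s,x,\cdot)$ over $\partial K$ with total mass $\CAP(K,U)$, hence is at least $\CAP(K,U)\,\inf_{y\in\partial K} p_U(s,x,y)$; integrating in $s$ gives $\Phi(t,x)\ge \CAP(K,U)\int_0^t \inf_{y\in\partial K} p_U(s,x,y)\,ds$, which is exactly the left-hand side of the asserted inequality. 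So it suffices to show $\Phi(t,x)\le \psi_{K,U}(t,x)$. The key step is to recognise $\Phi$ as (dominated by) the solution of a heat problem that sits below $\psi_{K,U}$. Since $p_U$ is the Dirichlet heat kernel of the \emph{larger} domain $U$ (which ignores the obstacle $K$) and the boundary flux $-\partial_\nu\psi^{eq}_{K,U}$ on $\partial K$ is the instantaneous rate at which the equilibrium potential injects ``heat'' to maintain the value $1$ on $\partial K$, Duhamel's formula shows that $\Phi$ solves $\partial_t\Phi=\Delta_x\Phi$ in $U$ away from $\partial K$, with zero initial data, zero boundary value on $\partial U$, and with a jump in normal derivative across $\partial K$ equal to $-\partial_\nu\psi^{eq}_{K,U}$; in particular $\Phi$ is continuous and its boundary value on $\partial K$ is $\le 1$ for all $t$ (it approaches the equilibrium potential's boundary behaviour only as $t\to\infty$, and monotonically from below). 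Restricting $\Phi$ to $U\setminus K$, it is then a subsolution of the problem defining $\psi_{K,U}$: same equation, same zero initial data, zero data on $\partial U$, and data $\le 1 = \psi_{K,U}$ on $\partial K$. The parabolic maximum principle yields $\Phi\le\psi_{K,U}$ on $(0,t]\times(U\setminus K)$, and evaluating at $x$ finishes the proof.

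The main obstacle I anticipate is making the ``$\Phi$ has boundary value $\le 1$ on $\partial K$'' claim rigorous, i.e.\ controlling the single-layer potential $\int_0^t\!\int_{\partial K} p_U(s,x,y)(-\partial_\nu\psi^{eq}_{K,U})(y)\,d\sigma\,ds$ as $x\to\partial K$. This requires the standard jump relations for the heat single-layer potential on a smooth hypersurface, together with the observation that the \emph{stationary} single-layer potential of the charge $-\partial_\nu\psi^{eq}_{K,U}$ built from the full-$U$ Green kernel reproduces exactly $\psi^{eq}_{K,U}$ (by \eqref{eq:capacity-flux} and Green's representation), so the time-integrated version converges monotonically up to $\psi^{eq}_{K,U}\le 1$ as $t\to\infty$; monotonicity in $t$ is clear since the integrand $p_U(s,x,y)\ge 0$, giving $\Phi(t,x)\le\Phi(\infty,x)=\psi^{eq}_{K,U}(x)\le 1$. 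Once this boundary bound and the Duhamel identification of $\Phi$ as a (weak) solution are in hand, the comparison argument is routine. If one prefers to avoid layer-potential technicalities entirely, an alternative is to prove the inequality first for the approximating problems where $\partial K$ is replaced by a slightly fattened smooth shell and the heat kernel $p_U$ by $p_{U\setminus K'}$ for $K'\Subset K$, where everything is smooth and the maximum principle applies directly, and then pass to the limit using monotone convergence of the heat kernels and of the capacities; I would mention this as the fallback but carry out the single-layer-potential version as the main line, since it is cleaner and is exactly the Grigor'yan--Saloff-Coste argument.
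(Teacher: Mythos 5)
Your proposal is correct, and it takes a genuinely different route from the paper's. Both proofs ultimately reduce to the same penultimate inequality
$$\psi_{K,U}(t,x)\ \geq\ -\int_{\partial K}\Bigl(\int_0^t p_U(s,x,y)\,ds\Bigr)\,\partial_\nu\psi^{eq}_{K,U}(y)\,d\sigma(y)\ ,$$
and to the same final step of pulling $\inf_{y\in\partial K}p_U(s,x,y)$ out of the flux integral using $-\partial_\nu\psi^{eq}_{K,U}\geq 0$ and \eqref{eq:capacity-flux}. What differs is how that inequality is obtained. The paper reaches it through a chain of integral identities: it writes $\psi^{eq}_{K,U}-\psi_{K,U}(t,\cdot)$ as the action of $p_{U\setminus K}(t,\cdot,\cdot)$ on $\psi^{eq}_{K,U}$ (its first lemma), substitutes the Green representation of $\psi^{eq}_{K,U}$ (its second lemma), writes $G_U=\int_0^\infty p_U(s,\cdot,\cdot)\,ds$, applies $p_{U\setminus K}\le p_U$ together with the Chapman--Kolmogorov identity to telescope the double time integral into $\int_t^\infty p_U$, and then subtracts from the full Green representation. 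You instead define the truncated single-layer potential $\Phi$ directly and compare it with $\psi_{K,U}$ by the parabolic maximum principle on $U\setminus K$, verifying the boundary inequality $\Phi\le 1$ on $\partial K$ via the clean observation $\Phi(t,\cdot)\le\Phi(\infty,\cdot)=\psi^{eq}_{K,U}\le 1$ (which uses the same Green-representation lemma as the paper but nothing else). Your route avoids the paper's first lemma and the Chapman--Kolmogorov telescoping, replacing them with a subsolution comparison; the price is that you must know $\Phi$ solves the heat equation off $\partial K$ and is continuous up to $\partial K$ (layer-potential regularity), which the paper sidesteps by never touching the boundary behaviour of $\Phi$ and only manipulating absolutely convergent integrals. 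Note that the continuity of $\Phi$ up to $\partial K$ can in fact be dispensed with: the pointwise bound $\Phi(t,x)\le\psi^{eq}_{K,U}(x)$ holds on all of $U\setminus K$, and $\psi^{eq}_{K,U}$ is continuous with boundary value $1$ on $\partial K$, so the $\limsup$ version of the comparison principle applies without invoking jump relations -- this makes your argument essentially as technicality-free as the paper's. One small caveat: your parenthetical claim that this \emph{is} Grigor'yan--Saloff-Coste's own argument is not something the paper asserts (the paper presents its chain of identities precisely as a PDE-flavoured alternative), but that does not affect the correctness of your proof.
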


For the proof we start with
\begin{lemma}
	For all $t > 0$ and $x \in U\setminus K$, 
	\begin{equation} \label{diffheat}
		\psi_{K, U}^{eq}(x) - \psi_{K, U}(t,x) = \int_{U\setminus K}  p_{U\setminus K}(t, x,y)\psi_{K, U}^{eq}(y) dy\ .
	\end{equation}
\end{lemma}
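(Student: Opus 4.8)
We want to prove the identity
$$\psi_{K, U}^{eq}(x) - \psi_{K, U}(t,x) = \int_{U\setminus K} p_{U\setminus K}(t, x,y)\,\psi_{K, U}^{eq}(y)\, dy\ .$$
The natural approach is to recognize that the left-hand side, as a function of $(t,x)$, solves a heat equation with \emph{homogeneous} boundary data, and hence is represented by the Dirichlet heat kernel $p_{U\setminus K}$ acting on its own initial data. So the first step is to set $w(t,x) := \psi_{K, U}^{eq}(x) - \psi_{K, U}(t,x)$ on $\Rb_+\times(U\setminus K)$ and check what initial/boundary value problem it solves.

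First I would verify the boundary conditions: on $\partial K$ both $\psi_{K, U}^{eq}$ and $\psi_{K, U}(t,\cdot)$ equal $1$, so $w(t,\cdot)=0$ there; on $\partial U$ both equal $0$, so again $w(t,\cdot)=0$. Thus $w$ has zero Dirichlet data on all of $\partial(U\setminus K)$ for every $t>0$. Next, the equation: $\psi_{K, U}^{eq}$ is harmonic and time-independent, so $\partial_t w = -\partial_t \psi_{K, U} = -\Delta_x \psi_{K, U} = \Delta_x(\psi_{K, U}^{eq} - \psi_{K, U}) = \Delta_x w$ — so $w$ solves the heat equation in $U\setminus K$. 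Finally the initial data: since $\psi_{K, U}(0,\cdot)=0$, we get $w(0,x) = \psi_{K, U}^{eq}(x)$. Therefore $w$ is the solution of the heat equation on $U\setminus K$ with zero boundary data and initial data $\psi_{K, U}^{eq}$, which by the heat-kernel representation recalled just above the lemma is exactly $\int_{U\setminus K} p_{U\setminus K}(t,x,y)\,\psi_{K, U}^{eq}(y)\,dy$. This gives the claimed identity.

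**Main obstacle.** The computation itself is a one-line application of uniqueness for the Dirichlet heat equation; the only genuine care needed is regularity/compatibility at the corner where $\partial K$ meets the initial time slice. The function $\psi_{K, U}^{eq}$ does not vanish on $\partial K$ (it equals $1$), while the solution $\psi_{K, U}(t,x)$ is built to have $0$ initial data and $1$ boundary data, so near $t=0$, $x\in\partial K$ there is a boundary layer and $w$ is not continuous up to $\{0\}\times\partial K$. One must therefore justify the heat-kernel representation for the initial datum $\psi_{K, U}^{eq}\in L^\infty(U\setminus K)$ (equivalently, that both sides agree as the unique bounded weak solution) rather than invoking a naive classical-solution uniqueness statement. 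This is standard — the Dirichlet heat semigroup on a bounded smooth domain is a contraction on $L^2$ and on $L^\infty$, and bounded solutions with the given data are unique — but it is the point where some words are needed. I would phrase the argument in terms of the heat semigroup $e^{t\Delta_{U\setminus K}}$: both $w(t,\cdot)$ and $e^{t\Delta_{U\setminus K}}\psi_{K, U}^{eq}$ are bounded, solve the heat equation with zero Dirichlet data, and have the same $L^2$-limit as $t\to 0^+$, hence coincide for all $t>0$, with smoothness for $t>0$ coming from parabolic interior/boundary regularity.
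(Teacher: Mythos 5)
Your proof is correct and takes essentially the same approach as the paper: both sides solve the heat equation in $U\setminus K$ with zero Dirichlet boundary data and the same initial datum $\psi_{K,U}^{eq}$, hence agree by uniqueness. The paper states this in one sentence; your added discussion of the corner incompatibility at $\{0\}\times\partial K$ and the need to work with bounded (or $L^2$) uniqueness rather than classical uniqueness is a reasonable point of care that the paper leaves implicit.
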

\begin{proof}
Indeed, both sides of the equation satisfy the heat equation in~$U\setminus K$
with  the same initial condition and with  boundary condition set to zero.
\end{proof}

On the other hand the equilibrium temperature $\psi_{K, U}^{eq}$
is expressed via the Green kernel as:
\begin{lemma} For all $x\in U\setminus K$
	\begin{equation}\label{eq:Green1}
			\psi_{K, U}^{eq}(x) = -\int_{\partial K} G_U(x,y) \partial_\nu \psi_{K, U}^{eq}(y)\, d\sigma(y)\ .
	\end{equation}
	where $\nu$ is the outer unit normal with respect to~$K$.
\end{lemma}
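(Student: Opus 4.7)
The plan is to derive this Green representation by applying Green's second identity to $u=\psi_{K,U}^{eq}$ and $v=G_U(x,\cdot)$ on the punctured region $D_\ep=(U\setminus K)\setminus\overline{B(x,\ep)}$, where $\ep>0$ is small enough that $\overline{B(x,\ep)}\subset U\setminus K$, and then pass to the limit $\ep\to 0$. Both $u$ and $v$ are harmonic on $D_\ep$, so the volume terms vanish and the identity collapses to $0=\int_{\partial D_\ep}(u\,\partial_{\nu_D}v-v\,\partial_{\nu_D}u)\,\dif\sigma$, whose right-hand side splits into contributions over the three pieces $\partial U$, $\partial K$, and $\partial B(x,\ep)$.

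I would handle each piece in turn. The contribution on $\partial U$ vanishes because both $u$ and $v$ do. On $\partial B(x,\ep)$ the outward normal of $D_\ep$ points towards $x$, and the standard fundamental-solution expansion $G_U(x,y)=((d-2)\omega_{d-1})^{-1}d(x,y)^{-(d-2)}+O(1)$ in normal coordinates shows that $\int_{\partial B(x,\ep)}u\,\partial_{\nu_D}v\,\dif\sigma\to u(x)=\psi_{K,U}^{eq}(x)$, while $\int_{\partial B(x,\ep)}v\,\partial_{\nu_D}u\,\dif\sigma=O(\ep)$ and drops out. On $\partial K$ the outward normal of $D_\ep$ equals $-\nu$, where $\nu$ is the outer normal of $K$ used in the statement; combining this with $u\equiv1$ on $\partial K$ reduces that piece to $-\int_{\partial K}\partial_\nu G_U(x,y)\,\dif\sigma(y)+\int_{\partial K}G_U(x,y)\,\partial_\nu\psi_{K,U}^{eq}(y)\,\dif\sigma(y)$.

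The final step is to dispose of the first, unwanted term. Since $x\in U\setminus K$, the function $y\mapsto G_U(x,y)$ is smooth and harmonic throughout a neighborhood of $\overline K$, so the divergence theorem gives $\int_{\partial K}\partial_\nu G_U(x,y)\,\dif\sigma(y)=\int_K\Delta_y G_U(x,y)\,\dif y=0$. Collecting the three contributions yields $0=\psi_{K,U}^{eq}(x)+\int_{\partial K}G_U(x,y)\,\partial_\nu\psi_{K,U}^{eq}(y)\,\dif\sigma(y)$, which rearranges to \eqref{eq:Green1}. The only real obstacle is the sign bookkeeping on $\partial K$, where the outward normal of $D_\ep$ is opposite to $\nu$; as a sanity check I would reconfirm the formula via an alternative route, extending $\psi_{K,U}^{eq}$ by the constant~$1$ across $K$ and computing that its distributional Laplacian on $U$ equals $\partial_\nu\psi_{K,U}^{eq}\cdot\delta_{\partial K}$, so that the Dirichlet Green representation $\psi(x)=-\int_U G_U(x,y)\,\Delta\psi(y)\,\dif y$ on $U$ delivers \eqref{eq:Green1} directly.
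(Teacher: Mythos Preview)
Your proof is correct and follows essentially the same route as the paper: both apply Green's identity in $U\setminus K$ to $\psi_{K,U}^{eq}$ and $G_U(x,\cdot)$, obtain the two boundary integrals over $\partial K$, and then kill the unwanted term $\int_{\partial K}\partial_\nu G_U(x,y)\,d\sigma(y)$ by the divergence theorem using that $G_U(x,\cdot)$ is harmonic in $K$ when $x\notin K$. The only difference is that you spell out the derivation of the Green representation (via the excised ball $B(x,\ep)$ and the fundamental-solution asymptotics) rather than invoking it as a known formula, and you append a distributional sanity check; the paper simply quotes the representation in one line.
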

\begin{proof}
Green's identity in $U\setminus K$ shows,
as $\psi_{K, U}^{eq}$ is harmonic, that 
$$
		\psi_{K, U}^{eq}(x) = 
		 -\int_{\partial K}  G_U(x,y) \partial_\nu \psi_{K, U}^{eq}(y)\, d\sigma(y) 
   +\int_{\partial K} \partial_\nu G_U(x,\cdot)(y) \psi_{K, U}^{eq}(y)\,  d\sigma(y)\ .
$$

Observe now that the second integral term vanishes for $x\in U\setminus K$:
$$
	\int_{\partial K} \partial_{\nu} G_U(x,\cdot)(y)\, d\sigma(y)= \int_{K} \Delta_y G_U(x,y)\, dy
	= 0\ .
$$
\end{proof}
\begin{remark}
One already sees how capacity might arise by
comparing the right hand side of~\eqref{eq:Green1} with the expression in~\eqref{eq:capacity-flux}.
\end{remark}

\begin{proof}[Proof of Theorem~\ref{thm:capacity-upper-bound}]
We combine the preceding two lemmas, the connection between the Green kernel and the heat kernel (see~\cite{chavel}*{Ch.\ VII, p. 177}) and the maximum principle
 in order to estimate $\psi_{K, U}^{eq}(x)-\psi_{K, U}(t,x)$.

\begin{equation}\label{ineq:distance-from-equilibrium}
\begin{split}
	\psi_{K, U}^{eq}(x) - \psi_{K, U}(t,x) &= \int_{U\setminus K} p_{U\setminus K}(t,x,y) \psi_{K, U}^{eq}(y)\, dy\\
 &= -\int_{U\setminus K} \int_{\partial K} p_{U\setminus K}(t,x,y) G_U(y,z) \partial_{\nu} \psi_{K, U}^{eq}(z)\, d\sigma(z)\, dy\\
	&= -\int_{U\setminus K} \int_{\partial K} \int_0^\infty p_{U\setminus K}(t,x,y) p_U(s,y, z) \partial_\nu \psi_{K, U}^{eq}(z)\, ds\, d\sigma(z)\, dy \\
	&\leq  -\int_{\partial K} \int_0^\infty \left(\int_{U} p_U(t,x,y) p_U(s,y,z)\, dy \right) \partial_\nu \psi_{K, U}^{eq}(z)\,  ds\,  d\sigma(z)\\
	&=   - \int_{\partial K} \int_0^\infty  p_U(t+s,x,z)  \partial_\nu \psi_{K, U}^{eq}(z)\, ds\, d\sigma(z)\\
 &=- \int_{\partial K} \left(\int_t^\infty  p_U(s,x,y) \, ds\right) \partial_\nu \psi_{K, U}^{eq}(y)\, d\sigma(y)\ .
 \end{split}
\end{equation}

On the other hand, by \eqref{eq:Green1} we can write:
\begin{multline}\label{eq:Green2}
	\psi_{K, U}^{eq}(x)= -\int_{\partial K} G_U(x,y)\partial_{\nu} \psi_{K, U}^{eq}(y)\, d\sigma(y)\\
	= -\int_{\partial K} \left(\int_0^\infty p_U(s,x,y) \, ds\right)\partial_\nu \psi_{K, U}^{eq}(y)\, d\sigma(y)\ .
\end{multline}

Therefore, combining \eqref{ineq:distance-from-equilibrium} and \eqref{eq:Green2}, we have the following lower bound for $\psi_{K, U}(t,x)$:

\begin{multline*}\label{eq41?}
	\psi_{K, U}(t,x) 
	 \geq -\int_{\partial K} \left(\int_0^t p_U(s,x,y)\, ds\right)\partial_\nu \psi_{K, U}^{eq}(y)\, d\sigma(y)\\
	 \geq -\left(\int_0^t \inf_{y \in\partial K} p_U(s,x,y)\, ds\right) \int_{\partial K}\partial_\nu \psi_{K, U}^{eq}(y)\, d\sigma(y)\\
	 = \left(\int_0^t \inf_{y\in \partial K} p_U(s,x,y)\, ds\right) \CAP(K, U)\ .
\end{multline*}

\end{proof}
\subsection{Estimating the heat kernel from below}\label{sec:heat-kernel-below}
In order to apply the general capacity upper bound via the heat flow (Theorem~\ref{thm:capacity-upper-bound}) we need to know lower bounds on the heat kernel. We cover~$M$ by a finite number of geodesic balls~$\{B(x_k, r_0/4)\}_k$, such that $\{B(x_k, r_0)\}_k$ are strongly convex geodesic balls.
We set $U=B(x_j, r_0)$, and~$K\Subset U$, where the point~$x_j$ and the subset~$K$ will be specified in~\S\ref{sec:majorization}.
\begin{proposition}\label{prop:capacity-temperature}
	There exists a positive constant $C$ such that 
 for all \mbox{$0<r<r_0$} and $x\in U\setminus K$ such that $K\subset B(x, r)$
	\begin{equation*}
 		\CAP(K, U)\leq C\psi_{K, U}(r^2, x)  r^{d-2}\ .
	\end{equation*}
\end{proposition}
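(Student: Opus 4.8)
The plan is to feed Theorem~\ref{thm:capacity-upper-bound}, applied at time $t=r^2$, a lower bound on the Dirichlet heat kernel $p_U$ at the scale $r$. Theorem~\ref{thm:capacity-upper-bound} gives
\[\CAP(K,U)\int_0^{r^2}\inf_{y\in\partial K}p_U(s,x,y)\,\dif s\ \le\ \psi_{K,U}(r^2,x)\ ,\]
so it suffices to prove $\int_0^{r^2}\inf_{y\in\partial K}p_U(s,x,y)\,\dif s\ \ge\ c\,r^{2-d}$ for some $c=c(M,g)>0$. Every $y$ appearing in the infimum lies in $\partial K\subset K\subset B(x,r)$, hence $d(x,y)<r$; so the task reduces to the pointwise estimate
\[p_U(s,x,y)\ \ge\ c'\,r^{-d}\qquad\text{for all }s\in[r^2/2,r^2]\text{ and }d(x,y)<r\ ,\]
after which $\int_0^{r^2}\inf_{y\in\partial K}p_U(s,x,y)\,\dif s\ge\int_{r^2/2}^{r^2}c'r^{-d}\,\dif s=\tfrac{c'}{2}r^{2-d}$, giving $\CAP(K,U)\le\tfrac{2}{c'}\psi_{K,U}(r^2,x)\,r^{d-2}$.

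For the pointwise estimate I would use domain monotonicity of the heat kernel together with a rescaling. In the situation of \S\ref{sec:majorization} the point $x$ and the set $K$ lie in the concentric inner ball $B(x_j,r_0/4)$ and $r$ is small, so $B(x,2r)\Subset U$, and the parabolic maximum principle gives $p_U(s,x,y)\ge p_{B(x,2r)}(s,x,y)$. Scaling the metric on $B(x,2r)$ by $(2r)^{-2}$ turns it into a unit geodesic ball whose metric is, uniformly over all admissible centers (since $2r<\inj(M)$), $C^\infty$-close to the Euclidean one; for times in $[\tfrac18,\tfrac14]$ and points within $\tfrac12$ of the center the Dirichlet heat kernel of such a ball is bounded below by a positive constant, being a positive continuous function on a fixed compact set of times, points and metrics with no boundary interference. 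Undoing the scaling --- the heat kernel of a ball of radius $\rho$ equals $\rho^{-d}$ times that of the unit ball at time $s/\rho^2$ --- yields $p_{B(x,2r)}(s,x,y)\ge c'(2r)^{-d}$ for $s\in[r^2/2,r^2]$ and $d(x,y)<r$, which is the desired bound. Alternatively, and closer to the tools mentioned at the start of the section and recorded in \S\ref{sec:appendix}, one can work on all of $M$: the Li--Yau lower bound $p_M(s,x,y)\ge c_1 s^{-d/2}e^{-c_2 d(x,y)^2/s}$ for $0<s\le1$ together with the principle of not feeling the boundary $0\le p_M(s,x,y)-p_U(s,x,y)\le C s^{-d/2}e^{-c\,\mathrm{dist}(\{x,y\},\partial U)^2/s}$ controls the boundary error for $s\le r^2$ because $\mathrm{dist}(\{x,y\},\partial U)\gtrsim r_0\gg r$, while the main term dominates and is $\gtrsim r^{-d}$ on $s\in[r^2/2,r^2]$, $d(x,y)<r$.

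The main obstacle is precisely this heat-kernel lower bound near the chart boundary --- making the principle of not feeling the boundary (equivalently, the comparison $p_U\ge p_{B(x,2r)}$ plus the rescaled interior estimate) quantitative; everything else --- invoking Theorem~\ref{thm:capacity-upper-bound}, truncating the $s$-integral to $[r^2/2,r^2]$, bookkeeping constants --- is routine. I would also record the harmless facts that $0\le\psi_{K,U}\le1$ by the maximum principle and that $s\mapsto\inf_{y\in\partial K}p_U(s,x,y)$ is continuous and positive on $(0,\infty)$ with limit $0$ as $s\to0^+$ (since $x\notin K$), so that all integrals above are finite and the manipulations legitimate.
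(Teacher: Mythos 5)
Your reduction is sound and your alternative route is essentially the paper's proof. The paper applies Theorem~\ref{thm:capacity-upper-bound} at $t=r^2$ and lower-bounds the heat-kernel time-integral via the Gaussian lower bound $p_M(s,x,y)\geq C_3 s^{-d/2}e^{-d(x,y)^2/(4s)}$ of Theorem~\ref{thm:heat-kernel-lower-bd} together with the principle of not feeling the boundary, Theorem~\ref{thm:not-feeling-bdry}; it then integrates $\int_0^{r^2}Cs^{-d/2}e^{-Cr^2/s}\,\dif s\geq Cr^{2-d}$ directly, whereas you truncate to $s\in[r^2/2,r^2]$ where the integrand is $\gtrsim r^{-d}$. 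Both computations are fine. Two remarks. First, a naming slip: the lower bound $p_M(s,x,y)\gtrsim s^{-d/2}e^{-cd(x,y)^2/s}$ is the Cheeger--Yau/Davies--Mandouvalos bound (Theorem~\ref{thm:heat-kernel-lower-bd}); Li--Yau is the paper's source for the \emph{upper} bound (Theorem~\ref{thm:heat-kernel-upper-bd}). Also, the paper's version of ``not feeling the boundary'' is multiplicative, $p_U/p_M\geq 1-e^{-\eps/t}$, rather than the additive remainder you quote, but since $p_M\lesssim s^{-d/2}$ on the relevant range the two forms are interchangeable here. Second, your primary argument --- domain monotonicity $p_U\geq p_{B(x,2r)}$ plus a rescaling to a unit ball with near-Euclidean metric --- is a valid and conceptually clean alternative, but the step asserting a uniform positive lower bound for the rescaled Dirichlet kernels over a compact family of metrics is exactly the content that would need a quantitative proof (e.g.\ a uniform parabolic Harnack inequality or a short-time on-diagonal expansion); in effect you would be re-deriving the local case of Theorem~\ref{thm:heat-kernel-lower-bd}. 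The paper's choice to invoke the off-the-shelf estimates of \S\ref{sec:appendix} is the shorter path.
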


\begin{proof}
Recall the Gaussian lower bound for the Dirichlet heat kernel in~$U$,
which immediately follows from Theorems~\ref{thm:heat-kernel-lower-bd} and~\ref{thm:not-feeling-bdry}: 
 There exists $t_0>0$ such that for all $0<t<t_0$, and all $r>0$ small enough 

$$\int_0^t \inf_{y\in \partial K} p_U(s, x, y)\, ds \geq \int_0^t Cs^{-d/2}e^{-C r^2/s}\, ds\ .$$
For $t=r^2$ we conclude

\begin{equation}\label{ineq:lower-bound-truncated-green}
\int_{0}^{r^2} \inf_{y\in\partial K} p_U(s, x, y)\, ds\geq C r^{2-d}\ .
\end{equation}

Combining Theorem~\ref{thm:capacity-upper-bound} with inequality~\eqref{ineq:lower-bound-truncated-green}, we conclude the desired estimate. 
\end{proof}

\subsection{Comparing $\psi_{K, U}$  to a flow of an eigenfunction}\label{sec:majorization}
In this section $\Omega$ is a nodal domain of an eigenfunction~$u_\lambda$, where $u_\lambda$
is positive in~$\Omega$. The point~$x_{\max}\in\Omega$ is a maximal point of $u_\lambda$ in $\Omega$. 
For $0<\delta<1$ and $\lambda>\lambda_0$ large enough we let $K\subset B(x_{\max}, \delta\lambda^{-1/2})\setminus\Omega$ be any compact smooth subset.
We fix $U=B(x_j, r_0)$ to be one of the strongly convex geodesic balls defined in~\S\ref{sec:heat-kernel-below} for which $B(x_{\max}, \delta\lambda^{-1/2})\subset B(x_j, r_0/2)$.
 The aim of this subsection is to prove
\begin{theorem}\label{thm:cap-estimate}
    $$\CAP(K, U)\leq C\delta^2 (\delta\lambda^{-1/2})^{d-2}\ .$$
\end{theorem}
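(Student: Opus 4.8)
The plan is to realize the comparison idea sketched in the introduction: bound $\CAP(K,U)$ through Proposition~\ref{prop:capacity-temperature} by the temperature $\psi_{K,U}(r^2, x_{\max})$ with $r=\delta\lambda^{-1/2}$, and then majorize that temperature by a suitable multiple of the heat flow in $\Omega$ started from the eigenfunction $u_\lambda$ itself. First I would set $r=\delta\lambda^{-1/2}$ and note that $K\subset B(x_{\max},r)$, so Proposition~\ref{prop:capacity-temperature} gives
\begin{equation*}
\CAP(K,U)\leq C\,\psi_{K,U}(r^2,x_{\max})\,r^{d-2}\ .
\end{equation*}
It therefore suffices to show $\psi_{K,U}(r^2,x_{\max})\leq C\delta^2$, i.e. that the temperature at the center, after the short time $r^2=\delta^2\lambda^{-1}$, is at most of order $\delta^2$.

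The key comparison step is the following. Consider $w(t,x):=1-e^{-\lambda t}\,u_\lambda(x)/u_\lambda(x_{\max})$ on $\Rb_+\times\Omega$. Since $u_\lambda$ solves $-\Delta_g u_\lambda=\lambda u_\lambda$, the function $e^{-\lambda t}u_\lambda$ solves the heat equation, so $w$ solves $\partial_t w=\Delta_g w$ in $\Omega$; moreover $w(0,x)=1-u_\lambda(x)/u_\lambda(x_{\max})\geq 0$ (as $u_\lambda(x_{\max})$ is the max), $w\geq 0$ on $\partial\Omega$ (where $u_\lambda=0$ so $w=1$), and $w\geq 0$ for all time by the maximum principle. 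Now $\psi_{K,U}$ solves the heat equation in $U\setminus K$ with zero initial data, zero data on $\partial U$, and data $1$ on $\partial K$. On the common parabolic domain $(U\setminus K)\cap\Omega$ — note $K\subset B(x_{\max},\delta\lambda^{-1/2})\setminus\Omega$ so $K$ is disjoint from $\Omega$, hence $K\cap\Omega=\emptyset$ and $(U\setminus K)\supset\Omega\cap U$ — I would compare $\psi_{K,U}$ with $w$. On the portion of the boundary lying on $\partial K$ we have $\psi_{K,U}=1$ but this part of $\partial K$ lies outside $\Omega$, so it does not meet $\overline{\Omega}$; the relevant boundary of $\Omega\cap U$ consists of $\partial\Omega\cap U$ (where $w=1\geq\psi_{K,U}$ since $\psi_{K,U}\leq 1$ by the maximum principle) and $\Omega\cap\partial U$ (where $\psi_{K,U}=0\leq w$). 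With matching/dominating initial data ($\psi_{K,U}(0,\cdot)=0\leq w(0,\cdot)$), the parabolic maximum principle on $\Omega\cap U$ yields $\psi_{K,U}(t,x)\leq w(t,x)$ there, in particular at $x=x_{\max}$.

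Evaluating at $x_{\max}$ and $t=r^2=\delta^2\lambda^{-1}$ gives
\begin{equation*}
\psi_{K,U}(r^2,x_{\max})\leq w(r^2,x_{\max})=1-e^{-\lambda r^2}=1-e^{-\delta^2}\leq\delta^2\ ,
\end{equation*}
and combining with the displayed capacity bound finishes the proof with $\CAP(K,U)\leq C\delta^2 r^{d-2}=C\delta^2(\delta\lambda^{-1/2})^{d-2}$. The main obstacle is the bookkeeping in the maximum-principle comparison: one must be careful that the two parabolic problems are being compared on the correct common domain $\Omega\cap U$, that no boundary piece is overlooked (in particular that $\partial(\Omega\cap U)\subset(\partial\Omega\cap U)\cup(\Omega\cap\partial U)$ up to a null set and that $K$ genuinely stays outside $\Omega$, which is exactly why $K$ was chosen in $B(x_{\max},\delta\lambda^{-1/2})\setminus\Omega$), and that regularity of $\psi_{K,U}$ and $w$ up to the (piecewise smooth) boundary is enough to invoke the maximum principle — here the smoothness assumptions on $K$, together with the strong convexity of $U$, are what make this rigorous.
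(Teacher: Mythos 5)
Your proposal is correct, and for the heart of the matter — bounding $\psi_{K,U}(\delta^2\lambda^{-1},x_{\max})$ by a constant times $\delta^2$ — it takes a genuinely different and more direct route than the paper's Proposition~\ref{prop:temp-upper-bound-no-lambda}. The paper first raises the boundary temperature (Lemma~\ref{lem:raising-temperature}), then converts the ``elegant idea'' $e^{-\lambda t}\le\int_\Omega p_\Omega(t,x_{\max},y)\,dy$ into a comparison of heat-kernel masses, and, because $\Omega$ need not sit inside $U$, it decomposes $\Omega$ into $\Omega\cap U$ and $\Omega\setminus U$, controlling the two error terms via Lemma~\ref{lem:intersection-comparison}, the Li--Yau Gaussian upper bound (Theorem~\ref{thm:heat-kernel-upper-bd}), and the not-feeling-the-boundary principle (Theorem~\ref{thm:not-feeling-bdry}). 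You instead compare solutions rather than kernel masses: you exhibit the caloric function $w(t,x)=1-e^{-\lambda t}u_\lambda(x)/u_\lambda(x_{\max})$, check $\psi_{K,U}\le w$ on the parabolic boundary of $(\Omega\cap U)\times(0,T]$, and invoke the parabolic minimum principle. This sidesteps the decomposition and the extra heat-kernel machinery entirely (those estimates are of course still needed in Proposition~\ref{prop:capacity-temperature}, which both you and the paper invoke), and it even gives the slightly cleaner constant $1-e^{-\delta^2}\le\delta^2$ rather than $C\delta^2$. The trade-off is that you are applying the comparison principle on the irregular set $\Omega\cap U$: this is fine, since the weak parabolic maximum principle requires no boundary regularity of the domain, only boundedness and continuity of $w-\psi_{K,U}$ up to the parabolic boundary. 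One small point to tighten: you claim $\partial K$ ``does not meet $\overline\Omega$,'' but the paper's hypothesis in \S\ref{sec:majorization} only gives $K\subset B\setminus\Omega$, so $\partial K$ could a priori touch $\partial\Omega$, where $\psi_{K,U}$ is discontinuous at $t=0$. This is harmless here because the passage-to-volume step in \S4.4 exhausts by $K\subset B\setminus\overline\Omega$ anyway, which gives positive distance from $K$ to $\overline\Omega$; it is worth stating that you work with this stronger inclusion.
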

Theorem ~\ref{thm:cap-estimate} immediately follows from Proposition~\ref{prop:capacity-temperature} and 
the following estimate.
\begin{proposition}\label{prop:temp-upper-bound-no-lambda}
There exist $c_1, c_2>0$ such that
	\begin{equation*}
		\psi_{K, U}(\delta^2\lambda^{-1},x_{\max}) \leq c_1\delta^2
	\end{equation*}
   for all $\lambda>c_2$ and $0<\delta<1$.
\end{proposition}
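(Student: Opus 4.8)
The plan is to bound the temperature $\psi_{K,U}(\delta^2\lambda^{-1}, x_{\max})$ by comparing the heat flow in the condenser $(K,U)$ with the heat flow of the eigenfunction in $\Omega$, which can be solved explicitly. First I would normalize so that $u_\lambda(x_{\max}) = 1$ (this is legitimate since $\psi_{K,U}$ does not depend on $u_\lambda$) and consider the function $w(t,x) := e^{-\lambda t} u_\lambda(x)$ on $\Rb_+\times\Omega$. This solves the heat equation $\partial_t w = \Delta_x w$ with initial data $u_\lambda$ and Dirichlet boundary condition zero on $\partial\Omega$ (since $u_\lambda$ vanishes there). Near $x_{\max}$ it stays close to $1$ for short times: quantitatively, by the eigenvalue equation and elliptic estimates, $u_\lambda$ is comparable to $1$ on a ball $B(x_{\max}, c\lambda^{-1/2})$, and for $t \le \delta^2\lambda^{-1}$ the exponential factor $e^{-\lambda t} \ge e^{-\delta^2}$ is close to $1$.

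The key comparison step is to show that $\psi_{K,U}(t, \cdot)$ is dominated by a suitable multiple of $1 - w(t,\cdot)$, or rather that $1 - w$ serves as a supersolution for the condenser problem. The point is that $K \subset B(x_{\max},\delta\lambda^{-1/2})\setminus\Omega$, so on $\partial K \cap \partial\Omega$ we want $1 - w = 1$ which matches the boundary data of $\psi_{K,U}$ on $\partial K$; on $\partial U$ we need $1 - w \ge 0$ (true since $w \le \max u_\lambda = 1$); and $1-w$ solves the heat equation with initial data $1 - u_\lambda \ge 0$. The subtlety is that $K$ may not lie entirely in $\Omega^c$ in a way that makes $\partial K$ decompose cleanly, and $w$ is only defined on $\Omega$; one handles this by restricting the comparison to the region $U\setminus(K\cup\Omega^c)$ or by using that on $\partial\Omega\cap U$ one has $w = 0$ so $1-w=1\ge \psi^{eq}_{K,U}$, and applying the parabolic maximum principle on $(U\setminus K)\cap\Omega$ together with the fact that $\psi_{K,U}\le 1$ everywhere. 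This yields $\psi_{K,U}(t,x_{\max}) \le 1 - w(t,x_{\max}) + (\text{error from the part of }\psi_{K,U}\text{ supported away from }\Omega)$.

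Then I would evaluate $1 - w(t, x_{\max}) = 1 - e^{-\lambda t}u_\lambda(x_{\max}) = 1 - e^{-\lambda t}$ at $t = \delta^2\lambda^{-1}$, giving $1 - e^{-\delta^2} \le \delta^2$, which is exactly the bound claimed (up to the constant $c_1$). More carefully, since $u_\lambda$ is not exactly $1$ but only comparable to $1$ on a neighborhood of $x_{\max}$, I would first use a Harnack-type inequality (or the mean value property for the lifted harmonic function, as in Corollary~\ref{cor:remez-eigen}) to get $u_\lambda(x_{\max}) = \max_\Omega u_\lambda$ and that $u_\lambda \ge c > 0$ on $B(x_{\max}, \delta\lambda^{-1/2})$ for $\delta$ small, combined with an interior gradient bound from Corollary~\ref{cor:gradient-eigen} to propagate the lower bound, so that $w(\delta^2\lambda^{-1}, x_{\max}) \ge 1 - c_1\delta^2$.

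The main obstacle I anticipate is making the maximum principle comparison rigorous at the boundary interface: the condenser heat flow $\psi_{K,U}$ lives on $U \setminus K$ while the eigenfunction flow $w$ lives on $\Omega$, and these domains overlap only partially. The clean way is to observe that the relevant comparison domain is $V := (U\setminus K)\cap\Omega$, whose boundary splits into a piece on $\partial K$ (where $\psi_{K,U}=1$ and $1-w\ge 1-\max_\Omega u_\lambda \cdot e^{-\lambda t}$ which may be less than $1$ — so actually we need $1-w$ rescaled, or we argue with $\psi^{eq}$ and a different function), a piece on $\partial\Omega$ (where $\psi_{K,U}\le 1 = 1-w$ since $w=0$ there as $u_\lambda|_{\partial\Omega}=0$), and a piece on $\partial U$ (where both can be controlled). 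Getting the constants and the boundary matching exactly right — in particular handling that $1-w$ on $\partial K$ equals $1 - e^{-\lambda t}u_\lambda(y)$ for $y\in\partial K$, which needs $u_\lambda(y)$ to be controlled from below even though $y\notin\Omega$ in general — is the delicate point, and is presumably where the Gaussian heat kernel upper bounds (Li–Yau) and the principle of not feeling the boundary referenced in the introduction enter, to show directly that the heat $\psi_{K,U}(\delta^2\lambda^{-1},x_{\max})$ emanating from $\partial K$ cannot exceed $c_1\delta^2$ because $\partial K$ is at distance $\gtrsim$ (something) from $x_{\max}$ relative to the time scale, rather than relying solely on the eigenfunction comparison.
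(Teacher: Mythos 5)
Your core idea is right, and in fact, once the boundary bookkeeping is done correctly, it yields a \emph{cleaner} proof than the one in the paper; but your write-up leaves the argument in a confused state, worrying about issues that do not actually arise. Let me sort this out.

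You propose comparing $\psi_{K,U}$ with $1-w$, where $w(t,x)=e^{-\lambda t}u_\lambda(x)/u_\lambda(x_{\max})$, on $V:=(U\setminus K)\cap\Omega$. The key simplification you do not exploit is that $K\subset B(x_{\max},\delta\lambda^{-1/2})\setminus\Omega$ forces $K\cap\Omega=\emptyset$, hence $V=U\cap\Omega$ and $\partial V\subset\partial U\cup\partial\Omega$. There is \emph{no} separate $\partial K$-piece of $\partial V$ interior to $\Omega$, and on the nodal set $\partial\Omega$ you have $u_\lambda=0$, so $1-w\equiv 1\ge\psi_{K,U}$ there without any need to control $u_\lambda$ from below. (If $\partial K$ happens to touch $\partial\Omega$, at such points both sides equal $1$, so the comparison still holds.) On $\partial U$ you have $\psi_{K,U}=0\le 1-w$, and at $t=0$ you have $\psi_{K,U}=0\le 1-u_\lambda/u_\lambda(x_{\max})$. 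The parabolic maximum principle on $V$ then gives $\psi_{K,U}(t,x_{\max})\le 1-e^{-\lambda t}$ \emph{exactly}, with no error term and no need for the Harnack/gradient control of $u_\lambda$ in a neighborhood of $x_{\max}$ that you mention — you evaluate $w$ only at $x_{\max}$, where $u_\lambda/u_\lambda(x_{\max})=1$ by definition. Plugging $t=\delta^2\lambda^{-1}$ finishes the proof. So the obstacles you flag (matching boundary data on $\partial K$, propagating a lower bound for $u_\lambda$) are phantom ones, and the appeal to Li--Yau bounds and the principle of not feeling the boundary that you suggest as a fallback is unnecessary for this step.

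By contrast, the paper takes a route that is genuinely heavier. It first proves $\psi_{K,U}(t,x)\le 1-\int_{U\setminus K}p_{U\setminus K}(t,x,y)\,dy$ (raising the boundary data on $\partial U$), then shows $e^{-\lambda t}\le\int_\Omega p_\Omega(t,x_{\max},y)\,dy$, and to connect the two integrals it must handle the case $\Omega\not\subset U$. That requires decomposing $\Omega=(\Omega\cap U)\cup(\Omega\setminus U)$, invoking Li--Yau Gaussian upper bounds to bound $\int_{\Omega\setminus U}p_\Omega$, a heat-kernel comparison lemma (Lemma~\ref{lem:intersection-comparison}), and the principle of not feeling the boundary to control $1-\int_U p_U$, yielding an error term $Ce^{-A/t}$ that is small only for $\lambda$ large. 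Your direct maximum-principle comparison on $V=U\cap\Omega$ bypasses all of that, at the cost of applying the parabolic maximum principle on the possibly non-smooth domain $U\cap\Omega$ — which is harmless, since both comparison functions are continuous on $\overline V$ (as $u_\lambda$ is smooth on $M$ and vanishes on $\partial\Omega$, and the condenser solution is continuous on $\overline{U\setminus K}$). If you want a proof proposal that matches the paper more closely, you would need the heat-kernel decomposition; but if you simply tidy up your maximum-principle argument along the lines above, you obtain a shorter and sharper proof of the same proposition.
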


First, by raising the boundary temperature on~$\partial U$ we have
\begin{lemma}\label{lem:raising-temperature}
For all $t>0$, $x\in U\setminus K$
\begin{equation*}
	\psi_{K, U}(t,x) \leq 1 - \int_{U\setminus K} p_{U\setminus K}(t, x,y)\, dy\ .
\end{equation*}
\end{lemma}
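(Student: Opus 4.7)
The plan is to apply the parabolic maximum principle by introducing a comparison temperature whose boundary data dominates that of $\psi_{K,U}$, and then to identify the gap between this comparison and the constant function $1$ with the pure heat diffusion $\int_{U\setminus K} p_{U\setminus K}(t,x,y)\,dy$.

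Concretely, I would let $\tilde\psi(t,x)$ denote the solution to the heat equation in $U\setminus K$ with zero initial condition and boundary value equal to~$1$ on \emph{both} $\partial U$ and $\partial K$. Since $\tilde\psi$ agrees with $\psi_{K,U}$ on $\partial K$, dominates it on $\partial U$ (where $\tilde\psi\equiv 1>0=\psi_{K,U}$), and they share the same zero initial condition in $U\setminus K$, the parabolic maximum principle applied to $\tilde\psi-\psi_{K,U}$ gives $\psi_{K,U}(t,x)\leq \tilde\psi(t,x)$ for all $t>0$ and $x\in U\setminus K$.

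Next, I would identify $\tilde\psi$ explicitly. The function $w(t,x):=1-\tilde\psi(t,x)$ satisfies the heat equation in $U\setminus K$, with initial condition $w(0,\cdot)\equiv 1$ and Dirichlet zero boundary values on the full parabolic boundary $\partial U\cup\partial K$. Hence $w$ is represented by the Dirichlet heat kernel of $U\setminus K$ acting on the constant function~$1$, namely
$$w(t,x)=\int_{U\setminus K} p_{U\setminus K}(t,x,y)\,dy,$$
so $\tilde\psi(t,x)=1-\int_{U\setminus K} p_{U\setminus K}(t,x,y)\,dy$. Combining this identity with the comparison $\psi_{K,U}\leq\tilde\psi$ gives exactly the claim of the lemma.

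I do not anticipate any serious obstacle: the argument is essentially the linearity and monotonicity of the heat semigroup with Dirichlet data, together with the smoothness of $\partial U$ and $\partial K$ (already assumed in the setup) which guarantees that classical parabolic maximum principle and the heat-kernel representation apply. The only point to be attentive to is to separate the roles of the two boundary components $\partial U$ and $\partial K$, since raising the temperature only on $\partial U$ is what produces the comparison function $\tilde\psi$ whose associated zero-boundary complement has the clean probabilistic interpretation.
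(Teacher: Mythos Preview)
Your argument is correct and is exactly the paper's own proof: the right-hand side is recognized as the heat flow in $U\setminus K$ with zero initial data and boundary value~$1$ on both $\partial U$ and $\partial K$, and the inequality follows from the parabolic maximum principle since this comparison dominates $\psi_{K,U}$ on the parabolic boundary.
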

\begin{proof}
Indeed, the expression on the right hand side gives the solution to the heat equation with initial condition set to zero in $U\setminus K$ and with  boundary  values set to one on $\partial U$ and~$\partial K$. Therefore, it has same initial condition as $\psi_{K, U}(t,x)$ but higher values on the boundary. 
\end{proof}

 The elegant idea (\cite{geor-mukh}) for the proof of Proposition~\ref{prop:temp-upper-bound-no-lambda} is the majorization 
 of $\psi_{K, U}(t, x)$ for $x\in\Omega$ by the heat flow starting at $1-u_\lambda(x)/u_\lambda(x_{\max})$ with constant boundary condition:
\begin{equation}\label{ineq:elegantidea}
	e^{-\la t}u_\lambda(x_{\max}) = \int_\Omega p_\Omega(t, x_{\max},y)u_\la(y)\, dy
	\leq  u_{\lambda}(x_{\max})\int_\Omega p_\Omega(t, x_{\max},y)\, dy\ .
\end{equation}
If $\Omega$ is contained in~$U$ then we have~$\Omega\subset U\setminus K$ and  $p_\Omega\leq p_{U\setminus K}$. Thus, in this case we can immediately obtain the estimate we aim for by combining~\eqref{ineq:elegantidea} and~Lemma~\ref{lem:raising-temperature}.
 However, we need to treat  the case where $\Omega$ is not fully contained in the ball~$U$, which requires several estimates on the heat kernel.
 \begin{proof}[Proof of Proposition~\ref{prop:temp-upper-bound-no-lambda}]
  We decompose $\Omega$ into two parts:
$$\Omega=(\Omega\cap U)\cup (\Omega\setminus U)$$
Since $\Omega\cap U\subseteq U\setminus K$ we have
\begin{multline}\label{ineq:omega-uminusK-comparison}
 \int_\Omega p_\Omega(t, x_{\max},y)\, dy
	\leq \int_{U\setminus K} p_{U\setminus K}(t, x_{\max},y)\, dy \\+ \int_{\Omega\cap U} (p_\Omega (t, x_{\max},y)-p_{\Omega\cap U}(t, x_{\max},y))\, dy +  \int_{\Omega\setminus U} p_\Omega(t, x_{\max},y)\, dy\ .
\end{multline}
To find upper bounds on the last term on the right hand side of \eqref{ineq:omega-uminusK-comparison}, we recall that $p_{\Omega} \leq p_M$ and that $p_M$ obeys the Li-Yau Gaussian upper bound in Theorem~\ref{thm:heat-kernel-upper-bd}.
Since $d(x_{\max}, \Omega\setminus U)\geq r_0/2$,  we have for~$0<t<1$
\begin{equation}\label{ineq:3rd-term}
\int_{\Omega\setminus U} p_M(t, x_{\max}, y)\  dy\leq \Vol(M) t^{-d/2}e^{-Cr_0^2/t} \leq
C_1e^{-C_0r_0^2/(2t)}
\end{equation}

To bound the second term on the right hand side of~\eqref{ineq:omega-uminusK-comparison} we start by the general comparison Lemma~\ref{lem:intersection-comparison} below, and obtain
\begin{equation}\label{ineq:2nd-term-I}
\int_{\Omega\cap U} (p_\Omega (t, x_{\max},y)-p_{\Omega\cap U}(t, x_{\max},y))\, dy \leq 1-\int_U p_U(t, x_{\max}, y)\, dy\ .
\end{equation}

 In order to estimate the right hand side of \eqref{ineq:2nd-term-I}, 
 we note that according to Theorem~\ref{thm:not-feeling-bdry} we can find $\eps>0$ and $t_0>0$
 such that for all $0<t<t_0$ and $x, y\in B(x_j, 3r_0/4)$
 
\begin{equation*}
    p_U(t, x, y)\geq (1-e^{-\eps /t})p_M(t, x, y)\ .
\end{equation*}

Then, we obtain for $0<t<t_0$
\begin{equation*}
\begin{split}
\int_{U} & p_U(t, x_{\max}, y)\, dy\geq \int_{B(x_j, 3r_0/4)} p_U(t, x_{\max}, y)\, dy\\
&\geq (1-e^{-\eps/t})\int_{B(x_j, 3r_0/4)} p_M(t, x_{\max}, y)\, dy\\
&=(1-e^{-\eps/t})\left(1-\int_{M\setminus B(x_j, 3r_0/4)} p_M(t, x_{\max}, y)\, dy\right)\\
&\geq (1-e^{-\eps/t})(1-Ce^{-Cr_0^2/t}) \geq 1-e^{-\eps/t}-Ce^{-Cr_0^2/t}
\end{split}
\end{equation*}
where we have used  the upper bound in Theorem~\ref{thm:heat-kernel-upper-bd} on $p_M$.
It follows that
\begin{equation}\label{ineq:2nd-term-II}
1-\int_U p_U (t, x_{\max}, y)\, dy\leq (C+1)e^{-A/t}\ ,
\end{equation}
where $A=\{\min{\eps, Cr_0^2}\}$.

Collecting the estimates~\eqref{ineq:elegantidea},~\eqref{ineq:omega-uminusK-comparison},~\eqref{ineq:3rd-term} and~\eqref{ineq:2nd-term-II}
we get that for all $0<t<t_0$
\begin{equation*}
e^{-\lambda t} \leq\int_{\Omega} p_{\Omega}(t, x_{\max}, y)\, dy \leq \int_{U\setminus K} p_{U\setminus K}(t, x_{\max}, y)\, dy + Ce^{-A/t}\ ,
\end{equation*}
implying
$$1-\int_{U\setminus K} p_{U\setminus K} (t, x_{\max}, y)\, dy
\leq 1-e^{-\lambda t} +Ce^{-A/t}\leq 1-e^{-\delta^2}+Ce^{-\delta^{-2}}< C\delta^2\ ,$$
for all $t<\delta^2\lambda^{-1}$, $\lambda>A^{-1}$ and $0<\delta<1$.
Finally we apply Lemma~\ref{lem:raising-temperature} to get the required estimate.
\end{proof}

It remains to prove a general comparison lemma
\begin{lemma}\label{lem:intersection-comparison}
For any open sets $W_1, W_2$  and $x\in W_1\cap W_2$
	\begin{equation} \label{eqheat1}
 \int_{W_1\cap W_2} (p_{W_1} (t, x,y)-p_{W_1\cap W_2}(t, x,y))\, dy \leq 1 - \int_{W_2} p_{W_2}(t, x,y)\, dy\ .
	\end{equation}
\end{lemma}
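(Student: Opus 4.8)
The plan is to interpret both sides in terms of heat content (the probability that Brownian motion started at $x$ has survived inside the relevant domain up to time $t$) and to use the domain monotonicity of the Dirichlet heat kernel together with the strong Markov property, phrased purely in PDE language via Duhamel's principle. Write $W=W_1\cap W_2$. The left-hand side is $\int_W p_{W_1}(t,x,y)\,dy-\int_W p_{W}(t,x,y)\,dy$, the difference of the mass at time $t$ of the heat flow in the big domain $W_1$ (restricted to $W$) and the heat flow in the small domain $W$. The right-hand side is $1-\int_{W_2}p_{W_2}(t,x,y)\,dy$, the amount of heat that has escaped $W_2$ by time $t$. The heuristic is: heat counted on the left has, in the $W_1$-flow, stayed in $W$ the whole time except that we allow it to have left $W$ and come back; the excess over the $W$-flow is controlled by trajectories that exit $W$, hence in particular must exit through $\partial W_2$ (since staying in $W_2$ and leaving $W=W_1\cap W_2$ forces leaving $W_1$, but then it cannot be counted in the $W_1$-flow at a point of $W$ without re-entering — the clean bound is to drop the $W_1$ constraint entirely and just say such heat has touched $\partial W_2$).

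The key steps, in order: First, I would establish the pointwise Duhamel-type identity
$$p_{W_1}(t,x,y)-p_{W}(t,x,y)=\int_0^t\!\!\int_{\partial W}p_{W_1}(t-s,x,z)\,(-\partial_{\nu_z}p_{W}(s,\cdot,y))\,d\sigma(z)\,ds\ \geq 0$$
for $x,y\in W$ (for smooth $W$; the general case by approximation or by a direct maximum-principle comparison). This shows $p_{W_1}\geq p_{W}$ on $W\times W$ and expresses the gap through boundary flux on $\partial W$. Integrating $y$ over $W$ gives that the left-hand side equals the integral over $s\in(0,t)$ and $z\in\partial W$ of the inward flux of the $W$-flow against the quantity $\int_W p_{W_1}(t-s,x,z)\cdots$; bounding $p_{W_1}(t-s,x,z)\le 1$ after integrating, one gets that the left-hand side is at most the total heat that, run in the $W$-flow, exits through $\partial W$ by time $t$, i.e. $1-\int_W p_{W}(t,x,y)\,dy$. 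That bound alone is not quite the stated one (it has $W$ where we want $W_2$), so the second step is to refine the boundary: $\partial W\subset\partial W_1\cup\partial W_2$, and heat in the $W_1$-flow cannot be lost across $\partial W_1$ and still reappear counted in the left-hand side, so only the $\partial W_2$ portion survives; this converts $1-\int_W p_W$ into $1-\int_{W_2}p_{W_2}$, using also domain monotonicity $p_W\le p_{W_2}$ on $W_2\times W_2$ in the right direction. Third, I would run the whole argument for general open $W_1,W_2$ by exhausting them from inside by smooth relatively compact sets and passing to the limit using monotone convergence of Dirichlet heat kernels.

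The main obstacle is making the ``$\partial W_1$-exits don't count'' step rigorous without Brownian motion: the cleanest PDE route is probably to avoid the boundary-flux bookkeeping altogether and argue as follows. Set $\varphi(t,x)=\int_{W_2}p_{W_2}(t,x,y)\,dy$ and $\eta(t,x)=\int_{W}\bigl(p_{W_1}(t,x,y)-p_{W}(t,x,y)\bigr)\,dy$; one checks $\eta$ solves $\partial_t\eta=\Delta\eta$ in $W$ with $\eta(0,\cdot)=0$ and boundary data on $\partial W$ equal to $\int_W p_{W_1}(t,\cdot,y)\,dy\le 1$ on the $\partial W_1\cap\overline{W_2}$ part (in fact $0$ there in the trace sense for the $W_1$-kernel restricted to $W$… this needs care) and $\le 1$ on $\partial W_2$; then $1-\varphi$ solves the heat equation in $W_2\supset W$ with the same zero initial data and boundary value $1$ on $\partial W_2$. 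Comparing these two on the common domain $W$ via the maximum principle — after checking that the boundary and initial data of $1-\varphi$ dominate those of $\eta$ on $\partial W$ — yields $\eta\le 1-\varphi$ on $W$, which is exactly~\eqref{eqheat1}. Verifying that the relevant traces satisfy the required ordering on all of $\partial W$ (both the $\partial W_1$ and $\partial W_2$ pieces) is the delicate point and where I expect to spend most of the effort; once that comparison of boundary/initial data is in hand, the conclusion is immediate from the parabolic maximum principle.
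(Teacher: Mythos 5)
Your final ``cleanest PDE route'' is exactly the paper's proof: both sides solve the heat equation in $W_1\cap W_2$ with zero initial data, the left side vanishes on $(\partial W_1)\cap\overline{W_2}$ while the right side equals $1$ on $\overline{W_1}\cap\partial W_2$, and since both quantities take values in $[0,1]$ the required boundary ordering holds on all of $\partial(W_1\cap W_2)$, whence the parabolic maximum principle finishes the argument. The ``delicate point'' you flag is in fact immediate from these $[0,1]$ bounds, so the earlier Duhamel-type detour is unnecessary and your proposal matches the paper's approach.
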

\begin{proof}
    Both sides satisfy the heat equation in $W_1\cap W_2$, and are equal to~$0$ at time~$0$.
    Consider $$\partial(W_1\cap W_2)=\left((\partial W_1)\cap \overline{W_2}\right)\cup \left(\overline{W_1}\cap \partial W_2\right)\ .$$
    The left hand side is~$0$ on $(\partial W_1)\cap \overline{W_2}$,
    while the right hand side is~$1$ on $\overline{W_1}\cap\partial W_2$.
    Since both sides attain values only between~$0$ and $1$, it follows that
    the left hand side is not bigger than the right hand side on $\partial(W_1\cap W_2)$,
    and the inequality follows from the maximum principle.
\end{proof}

\subsection{Passing from capacity to volume}
We first recall the following basic estimate
\begin{theorem}[{\cite{mazya}*{\S 2.2.3, Corollary 2}}]
    $$ \Vol(K)\leq C\CAP (K, U)^{d/(d-2)}\ .$$
\end{theorem}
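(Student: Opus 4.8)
The plan is to obtain this isocapacitary inequality directly from the Sobolev embedding, in the equivalent form $\Vol(K)^{(d-2)/d}\leq C\,\CAP(K,U)$. First I would record the Sobolev inequality that holds for compactly supported functions in the chart~$U$: for any $v\in C_c^1(\Rb^d)$ one has the scale-invariant Euclidean inequality $\|v\|_{L^{2d/(d-2)}(\Rb^d)}\leq C_S\,\|\nabla v\|_{L^2(\Rb^d)}$, and since $U$ is identified via a coordinate chart with a bounded domain in~$\Rb^d$ on which the metric~$g$ has bounded distortion relative to the Euclidean metric, the same inequality holds for $v\in C_c^1(U)$ with the Riemannian gradient and volume element, at the cost of a constant $C=C(M,g)$ — one simply extends~$v$ by zero to all of~$\Rb^d$ and absorbs the metric-comparison factors.

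Next I would run the short competitor argument. Let $v\in C_c^1(U)$ be any admissible function in the definition of $\CAP(K,U)$, so $v\geq 1$, hence $|v|\geq 1$, on~$K$. Then
$$\Vol(K)=\int_K 1\,dx\leq \int_K |v|^{2d/(d-2)}\,dx\leq \int_U |v|^{2d/(d-2)}\,dx\ .$$
Raising to the power $(d-2)/(2d)$ and squaring, followed by the Sobolev inequality above, gives
$$\Vol(K)^{(d-2)/d}\leq \|v\|_{L^{2d/(d-2)}(U)}^{2}\leq C^2\int_U |\nabla v|^2\,dx\ .$$
Taking the infimum over all admissible~$v$ yields $\Vol(K)^{(d-2)/d}\leq C^2\,\CAP(K,U)$, and raising both sides to the power $d/(d-2)$ produces the asserted bound $\Vol(K)\leq C'\,\CAP(K,U)^{d/(d-2)}$ with $C'=C'(M,g)$. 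Combined with Theorem~\ref{thm:cap-estimate} and the fact that $K\subset B(x_{\max},\delta\lambda^{-1/2})\setminus\Omega$ was arbitrary, this closes the proof of Theorem~\ref{thm:capacity}.

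The only genuinely delicate point is the first step — the validity of a \emph{pure-gradient} Sobolev inequality, with no lower-order $\|v\|_{L^2}$ term, on the chart~$U$. This is precisely why the capacity is defined through functions with \emph{compact support} in~$U$: such a function extends by zero to a $C_c^1$ function on~$\Rb^d$, where the scale-invariant Sobolev inequality holds unconditionally, so the metric distortion over the fixed bounded chart contributes only a multiplicative constant depending on~$(M,g)$. On a general compact manifold, without the support restriction, one would only have the weaker form $\|v\|_{L^{2d/(d-2)}}\leq C(\|\nabla v\|_{L^2}+\|v\|_{L^2})$, which would not give the clean homogeneity needed here. Everything else is monotonicity of integrals, one application of Hölder/power bookkeeping, and an optimization over competitors.
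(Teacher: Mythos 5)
Your argument is correct, and since the paper merely cites this isocapacitary inequality from Maz'ya without reproducing a proof, there is nothing in the paper itself to compare against. What you have given is the standard derivation of the isocapacitary inequality from the homogeneous Sobolev inequality: a competitor $v\in C_c^1(U)$ with $v\geq 1$ on $K$ satisfies $\Vol(K)\leq\int_U|v|^{2d/(d-2)}$, and one application of $\|v\|_{L^{2d/(d-2)}}\leq C\|\nabla v\|_{L^2}$ plus an infimum over competitors closes the loop. Your care about why the \emph{pure-gradient} inequality is available — compact support in $U$, extension by zero, bounded metric distortion on a fixed chart — is exactly the point that needs addressing, and you address it correctly; the constant is then controlled uniformly because $M$ is covered by finitely many such charts.

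For context, this is a genuinely different route from the one in Maz'ya's book at the cited location (\S2.2.3), where the isocapacitary inequality is deduced from the isoperimetric inequality via the coarea formula, with the Sobolev inequality then obtained as a downstream consequence. The two proofs are logically dual: Maz'ya's approach is intrinsic (isoperimetric $\Rightarrow$ isocapacitary $\Rightarrow$ Sobolev), works for general $p$-capacities, and gives sharp constants; yours (Sobolev $\Rightarrow$ isocapacitary) is shorter for the $p=2$ case at hand, piggybacks on the Euclidean Sobolev inequality through the chart, and does not track constants — which is all that the application in the paper requires. Both directions are part of Maz'ya's well-known chain of equivalences, so either is a perfectly legitimate way to justify the cited statement.

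One small cosmetic remark: when you write ``Raising to the power $(d-2)/(2d)$ and squaring'' you of course mean taking the $L^{2d/(d-2)}$-norm and then squaring, which is what your displayed inequality says; the prose is slightly compressed but the computation is right. Also, for readers who might worry about the infimum being over functions not a priori in $L^{2d/(d-2)}$, note that each admissible $v$ is $C_c^1$, hence bounded with compact support, so all the integrals are finite and the chain of inequalities is unconditional.
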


Exhausting $B(x_{\max}, \delta\lambda^{-1/2})\setminus \overline{\Omega}$ by smooth compact sets while using Theorem~\ref{thm:cap-estimate} gives that 
$$\Vol(B(x_{\max}, \delta\lambda^{-1/2})\setminus \overline{\Omega})\leq C\delta^{2d/(d-2)}(\delta\lambda^{-1/2})^{d}\ .$$

We also know by~\cite{hardt-simon} that the $d$-dimensional Hausdorff measure of the nodal set is zero.
So we have 
    $$\frac{\Vol\left(B(x_{\max}, \delta\lambda^{-1/2})\setminus\Omega\right)}{\Vol\left(B(x_{\max}, \delta\lambda^{-1/2})\right)}\leq C\delta^{2d/(d-2)}\ .$$
This concludes the proof of Theorem~\ref{thm:capacity}.

\section{Appendix: Heat kernel bounds in small times}\label{sec:appendix}
We recall celebrated Gaussian upper and lower bounds on the heat kernel,
together with the principle of not feeling the boundary.

In the following special case of Li-Yau upper bound the points~$x, y$ may be taken far apart:
\begin{theorem}[{\cite{li-yau}*{Corollary 3.1}, \cite{davies}*{Theorem 16}}]\label{thm:heat-kernel-upper-bd}
Let $M$ be a closed Riemannian manifold. For all $0<t<1$ and $x, y\in M$ we have
\begin{equation*}
p_M(t, x,y) \leq C_1 t^{-d/2} e^{-C_2 d(x,y)^2/t}
\end{equation*}
\end{theorem}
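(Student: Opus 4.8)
The bound is classical; the route I would take combines an on-diagonal parabolic estimate with Davies' exponential weighting, both available on any closed manifold for $0<t<1$ thanks to bounded geometry. \emph{Step 1: an on-diagonal bound.} I would first show $p_M(t,x,x)\le C t^{-d/2}$ for all $x\in M$ and $0<t<1$. Since $M$ is closed it satisfies a Nash-type inequality $\|f\|_{L^2}^{2+4/d}\le C\,(\|\nabla f\|_{L^2}^2+\|f\|_{L^2}^2)\,\|f\|_{L^1}^{4/d}$ (equivalently a uniform Faber--Krahn inequality $\lambda_1(\Omega)\ge c\,\Vol(\Omega)^{-2/d}$ for domains of small volume). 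Feeding this into Nash's differential inequality for $t\mapsto p_M(2t,x,x)=\|P_t\delta_x\|_{L^2}^2$ gives $p_M(t,x,x)\le C(t^{-d/2}+1)\le C' t^{-d/2}$ on $0<t<1$, the lower-order term affecting only large times. The semigroup identity together with Cauchy--Schwarz, $p_M(t,x,y)=\int_M p_M(t/2,x,z)p_M(t/2,z,y)\,dz\le p_M(t/2,x,x)^{1/2}p_M(t/2,y,y)^{1/2}$, then upgrades this to $p_M(t,x,y)\le Ct^{-d/2}$ for all $x,y$, but still without Gaussian decay.

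\emph{Step 2: inserting the Gaussian factor.} Fix $y$ and set $\psi(z)=d(y,z)$, which is $1$-Lipschitz, so $|\nabla\psi|\le 1$ almost everywhere. For $\alpha\in\R$ consider the conjugated semigroup $P_t^\alpha:=e^{\alpha\psi}P_t e^{-\alpha\psi}$. Differentiating $t\mapsto\int_M(P_t^\alpha f)^2$, integrating by parts, and absorbing the cross term $\alpha\int(P_t^\alpha f)\,\nabla\psi\cdot\nabla(P_t^\alpha f)$ by Cauchy--Schwarz and $|\nabla\psi|\le 1$, one obtains $\|P_t^\alpha\|_{L^1\to L^\infty}\le e^{C\alpha^2 t}\|P_t\|_{L^1\to L^\infty}\le Ct^{-d/2}e^{C\alpha^2 t}$. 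Unwinding the conjugation and using $\psi(y)=0$, $\psi(x)=d(x,y)$ gives $p_M(t,x,y)\le Ct^{-d/2}\exp\!\big(-\alpha\,d(x,y)+C\alpha^2 t\big)$, and optimizing with $\alpha=d(x,y)/(2Ct)$ yields $p_M(t,x,y)\le C_1 t^{-d/2}e^{-C_2 d(x,y)^2/t}$, as claimed.

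I expect the main technical point to be the energy estimate for $P_t^\alpha$ in Step 2, i.e.\ controlling the commutator of multiplication by $e^{\alpha\psi}$ with the heat semigroup; this is exactly where the $1$-Lipschitz property of the distance function and the cost $C\alpha^2 t$ enter. The only role of curvature is through the constants in the Nash inequality and in $|\nabla d(y,\cdot)|\le 1$, both of which are available on any closed $M$; a lower Ricci bound would contribute only factors $e^{Ct}$, harmless for $0<t<1$. Alternatively, one may simply invoke the Li--Yau gradient estimate (valid under $\mathrm{Ric}\ge -K$ up to such factors) and integrate it along a minimizing geodesic from $x$ to $y$, which is the argument in the cited references.
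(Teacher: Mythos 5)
The paper does not prove this bound; it is quoted in the Appendix as a classical result with references to Li--Yau's parabolic Harnack/gradient estimate and to Davies' exponential-weight method. Your sketch reconstructs the Davies route, which is indeed one of the two cited proofs, and the overall architecture (Nash $\Rightarrow$ on-diagonal bound $\Rightarrow$ ultracontractivity, then Davies weighting $\Rightarrow$ Gaussian factor) is correct. Two remarks on precision are worth recording. First, in Step~1 the Cauchy--Schwarz/semigroup argument gives $p_M(t,x,y)\le p_M(t,x,x)^{1/2}p_M(t,y,y)^{1/2}$ (not $p_M(t/2,\cdot,\cdot)$), since $\int_M p_M(t/2,x,z)^2\,dz=p_M(t,x,x)$; this is harmless but worth fixing. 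Second, and more substantively, the transition in Step~2 from the energy estimate to the displayed inequality
\[
\|P_t^\alpha\|_{L^1\to L^\infty}\le e^{C\alpha^2 t}\,\|P_t\|_{L^1\to L^\infty}
\]
is stated as if it were an immediate consequence of the $L^2\to L^2$ bound on the conjugated semigroup, but it is not: the energy computation yields $\|P_t^\alpha\|_{L^2\to L^2}\le e^{C\alpha^2 t}$ (in fact the cross terms $\alpha\int u\,\nabla\psi\cdot\nabla u$ from the two gradients cancel exactly when one pairs $\nabla(e^{\alpha\psi}u)$ with $\nabla(e^{-\alpha\psi}u)$, so no Cauchy--Schwarz absorption is needed), and to upgrade this to an $L^1\to L^\infty$ bound one must redo the Nash/log-Sobolev ultracontractivity iteration for the twisted generator $e^{\alpha\psi}\Delta e^{-\alpha\psi}$, carrying the $e^{C\alpha^2 t}$ factor through the iteration rather than simply multiplying operator norms. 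This is precisely the technical content of Davies' Theorem~16 and is where the real work lies. With that step spelled out, your argument is a complete and standard proof; the alternative you mention at the end, integrating the Li--Yau gradient estimate along a minimizing geodesic, is the other cited route and gives the same conclusion with explicit dependence on a lower Ricci bound.
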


The following lower bound on closed Riemannian manifolds follows from the comparison theorem of Cheeger-Yau~\cite{cheeger-yau}*{Theorem 3.1} and the explicit formula for the heat kernel on hyperbolic space~\cite{davies-mand}*{Theorem 3.1}. For different proofs with improved dependence on the geometry see~\cite{bakry-qian}*{p. 147} and~\cite{li-xu}*{Theorem 1.5}. In fact, we apply the lower bound only for $x$ close to $y$, in which case it can be derived also from the small time asymptotic expansion of the heat kernel when~$x$ is close to~$y$ (see~\cite{BGM}*{\S III.E} or~\cite{kannai}*{formula~(1.2)}).

\begin{theorem}\label{thm:heat-kernel-lower-bd}
    Let $M$ be a closed Riemannian manifold. There exists $C_3>0$ such that
    for all $0<t<1$, $x, y\in M$
\begin{equation}
    p_M(t, x,y) \geq C_3t^{-d/2}e^{-d(x, y)^2/(4t)}
\end{equation}
\end{theorem}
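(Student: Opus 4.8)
The plan is to deduce this from the Cheeger-Yau comparison theorem, the key point being that on a \emph{closed} manifold all distances are bounded. Since $M$ is compact its Ricci curvature is bounded from below, say $\mathrm{Ric}_M \geq -(d-1)\kappa^2\, g$ for some $\kappa \geq 0$, and its diameter $D := \mathrm{diam}(M)$ is finite. If $\kappa = 0$, the Cheeger-Yau inequality compares $M$ with Euclidean space and yields $p_M(t, x, y) \geq (4\pi t)^{-d/2} e^{-d(x,y)^2/(4t)}$ directly, so assume $\kappa > 0$.

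First I would invoke Cheeger-Yau: under the Ricci lower bound above, $p_M(t, x, y) \geq \bar p_\kappa\bigl(t, d(x,y)\bigr)$ for all $t > 0$ and $x, y \in M$, where $\bar p_\kappa(t, \rho)$ is the heat kernel of the simply connected space form of constant curvature $-\kappa^2$, regarded as a radial function of the geodesic distance $\rho$. It then suffices to produce a constant $c = c(M, g) > 0$ such that
\[
\bar p_\kappa(t, \rho) \;\geq\; c\, t^{-d/2} e^{-\rho^2/(4t)} \qquad \text{for all } 0 < t < 1,\ 0 \leq \rho \leq D .
\]

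Second, I would bound $\bar p_\kappa$ explicitly. After rescaling the metric we may assume $\kappa = 1$, i.e.\ work on $\mathbb{H}^d$. In odd dimension $\bar p_1$ is obtained from the one-dimensional kernel by applying a power of $\tfrac{1}{\sinh\rho}\partial_\rho$; for $d = 3$, for instance, $\bar p_1(t, \rho) = (4\pi t)^{-3/2}\, \tfrac{\rho}{\sinh\rho}\, e^{-t} e^{-\rho^2/(4t)}$, which is already bounded below by $c\, t^{-3/2} e^{-\rho^2/(4t)}$ on the stated range because there $\tfrac{\rho}{\sinh\rho} \geq \tfrac{D}{\sinh D}$ and $e^{-t} \geq e^{-1}$. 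In general I would quote the sharp two-sided Davies-Mandouvalos estimates: they give $\bar p_1(t, \rho) = (4\pi t)^{-d/2} e^{-\rho^2/(4t)}\, \Phi_d(t, \rho)$ with $\Phi_d(t, \rho)$ comparable, with constants depending only on $d$, to $(1+\rho)^{(d-3)/2}(1 + t + \rho)\, e^{-(d-1)^2 t/4}\, e^{-(d-1)\rho/2}$. On the compact set $\{0 \leq t \leq 1,\ 0 \leq \rho \leq D\}$ each of these factors is pinched between two positive constants, so $\Phi_d$ has a positive lower bound there; this is exactly the displayed inequality. Undoing the rescaling restores the bound for general $\kappa$, and Cheeger-Yau then gives the theorem.

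The only real work is the second step: reading off a lower bound for $\bar p_\kappa$ that is uniform over $0 < t < 1$ and $0 \leq \rho \leq D$ and that carries the \emph{sharp} Gaussian exponent $-\rho^2/(4t)$, not a worse one — which is precisely what the structure of the hyperbolic heat kernel supplies. For the range of $(t, x, y)$ actually needed in this paper, where $d(x,y)$ stays below a fixed small constant, one can sidestep Cheeger-Yau altogether: the parametrix construction gives, uniformly for $d(x, y) < \inj(M)/2$, $p_M(t, x, y) = (4\pi t)^{-d/2} e^{-d(x,y)^2/(4t)}\bigl(\theta(x, y)^{-1/2} + O(t)\bigr)$, where $\theta$ is the Riemannian volume density in geodesic normal coordinates; since $\theta \to 1$ on the diagonal, this already yields $p_M(t, x, y) \geq c\, t^{-d/2} e^{-d(x,y)^2/(4t)}$ for all sufficiently small $t$ and all such $x, y$.
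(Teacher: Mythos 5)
Correct, and it follows exactly the route the paper outlines for this theorem: Cheeger--Yau comparison against the constant-curvature space form, combined with the Davies--Mandouvalos explicit bounds for the hyperbolic heat kernel (with the parametrix/short-time expansion noted as the alternative when $d(x,y)$ is small, which is all the paper actually uses). The paper gives the result by citation only, so your working-out --- in particular observing that compactness of $M$ supplies both a uniform Ricci lower bound and a bounded range for $d(x,y)$, so that the Davies--Mandouvalos correction factor is pinched between positive constants on $\{0<t\le 1,\ 0\le\rho\le\mathrm{diam}(M)\}$ while the sharp Gaussian factor $e^{-\rho^2/(4t)}$ survives --- is precisely the argument the references supply.
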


To compare the Dirichlet heat kernel of a domain in a closed manifold to the heat kernel of the manifold we recall the following quantitative ``principle of not feeling the boundary'' due to Norris:
\begin{theorem}[{\cite{norr}*{proof of Theorem 1.3}}]\label{thm:not-feeling-bdry}
	Suppose $(M,g)$ is a closed  Riemannian manifold. For every strongly convex geodesic ball $B(x_*, r_0)\subset M$ of radius~$r_0$ there exist $t_0>0$ and $\eps>0$  such that
    \begin{equation}
		\frac{p_{B(x_*, r_0)}(t, x,y)}{p_{M}(t, x,y)} \geq 1 - e^{-\eps/t}
	\end{equation}
 for all $x, y\in B(x_*, 3r_0/4)$ and $0<t<t_0$.
\end{theorem}

\begin{bibdiv}
    \begin{biblist}

\bib{aronson}{article}{
   author={Aronson, D. G.},
   title={Bounds for the fundamental solution of a parabolic equation},
   journal={Bull. Amer. Math. Soc.},
   volume={73},
   date={1967},
   pages={890--896},
   issn={0002-9904},
   review={\MR{217444}},
   doi={10.1090/S0002-9904-1967-11830-5},
}

\bib{bakry-qian}{article}{
   author={Bakry, Dominique},
   author={Qian, Zhongmin M.},
   title={Harnack inequalities on a manifold with positive or negative Ricci
   curvature},
   journal={Rev. Mat. Iberoamericana},
   volume={15},
   date={1999},
   number={1},
   pages={143--179},
   issn={0213-2230},
   review={\MR{1681640}},
   doi={10.4171/RMI/253},
}

\bib{berard-meyer}{article}{
   author={B\'{e}rard, Pierre},
   author={Meyer, Daniel},
   title={In\'{e}galit\'{e}s isop\'{e}rim\'{e}triques et applications},
   language={French},
   journal={Ann. Sci. \'{E}cole Norm. Sup. (4)},
   volume={15},
   date={1982},
   number={3},
   pages={513--541},
   issn={0012-9593},
   review={\MR{690651}},
}

\bib{BGM}{book}{
   author={Berger, Marcel},
   author={Gauduchon, Paul},
   author={Mazet, Edmond},
   title={Le spectre d'une vari\'{e}t\'{e} riemannienne},
   language={French},
   series={Lecture Notes in Mathematics, Vol. 194},
   publisher={Springer-Verlag, Berlin-New York},
   date={1971},
   pages={vii+251},
   review={\MR{0282313}},
}

\bib{bruning78}{article}{
   author={Br\"{u}ning, Jochen},
   title={\"{U}ber Knoten von Eigenfunktionen des Laplace-Beltrami-Operators},
   language={German},
   journal={Math. Z.},
   volume={158},
   date={1978},
   number={1},
   pages={15--21},
   issn={0025-5874},
   review={\MR{478247}},
   doi={10.1007/BF01214561},
}

\bib{clmm}{article}{
   author={Chanillo, Sagun},
   author={Logunov, Alexander},
   author={Malinnikova, Eugenia},
   author={Mangoubi, Dan},
   title={Local version of Courant's nodal domain theorem},
   journal={J. Differential Geom.},
   volume={126},
   date={2024},
   number={1},
   pages={49--63},
   issn={0022-040X},
   review={\MR{4704545}},
   doi={10.4310/jdg/1707767334},
}

\bib{chavel}{book}{
   author={Chavel, Isaac},
   title={Eigenvalues in Riemannian geometry},
   series={Pure and Applied Mathematics},
   volume={115},
   note={Including a chapter by Burton Randol;
   With an appendix by Jozef Dodziuk},
   publisher={Academic Press, Inc., Orlando, FL},
   date={1984},
   pages={xiv+362},
   isbn={0-12-170640-0},
   review={\MR{768584}},
}

\bib{cheeger-yau}{article}{
   author={Cheeger, Jeff},
   author={Yau, Shing Tung},
   title={A lower bound for the heat kernel},
   journal={Comm. Pure Appl. Math.},
   volume={34},
   date={1981},
   number={4},
   pages={465--480},
   issn={0010-3640},
   review={\MR{615626}},
   doi={10.1002/cpa.3160340404},
}

\bib{davies}{article}{
   author={Davies, E. B.},
   title={Gaussian upper bounds for the heat kernels of some second-order
   operators on Riemannian manifolds},
   journal={J. Funct. Anal.},
   volume={80},
   date={1988},
   number={1},
   pages={16--32},
   issn={0022-1236},
   review={\MR{960220}},
   doi={10.1016/0022-1236(88)90062-6},
}
		
\bib{davies-mand}{article}{
   author={Davies, E. B.},
   author={Mandouvalos, N.},
   title={Heat kernel bounds on hyperbolic space and Kleinian groups},
   journal={Proc. London Math. Soc. (3)},
   volume={57},
   date={1988},
   number={1},
   pages={182--208},
   issn={0024-6115},
   review={\MR{940434}},
   doi={10.1112/plms/s3-57.1.182},
}

\bib{don-fef88}{article}{
   author={Donnelly, Harold},
   author={Fefferman, Charles},
   title={Nodal sets of eigenfunctions on Riemannian manifolds},
   journal={Invent. Math.},
   volume={93},
   date={1988},
   number={1},
   pages={161--183},
   issn={0020-9910},
   review={\MR{943927}},
   doi={10.1007/BF01393691},
}

\bib{don-fef90}{article}{
   author={Donnelly, H.},
   author={Fefferman, C.},
   title={Growth and geometry of eigenfunctions of the Laplacian},
   conference={
      title={Analysis and partial differential equations},
   },
   book={
      series={Lecture Notes in Pure and Appl. Math.},
      volume={122},
      publisher={Dekker, New York},
   },
   isbn={0-8247-8302-6},
   date={1990},
   pages={635--655},
   review={\MR{1044811}},
}

\bib{faber}{article}{
  author={Faber, Georg},
  title={Beweis, da{\ss} unter allen homogenen Membranen von gleicher Fl\"ache
    und gleicher Spannung die kreisf\"ormige den tiefsten Grundton gibt},
   language={German},
series  = {Sitzungsberichte},
journal={M{\"u}nch. {Ber}.},
year={1923},
pages={169--172},
eprint= {https://publikationen.badw.de/en/003399311},

}

\bib{geor}{article}{
   author={Georgiev, Bogdan},
   title={On the lower bound of the inner radius of nodal domains},
   journal={J. Geom. Anal.},
   volume={29},
   date={2019},
   number={2},
   pages={1546--1554},
   issn={1050-6926},
   review={\MR{3935269}},
   doi={10.1007/s12220-018-0050-2},
}

\bib{geor-mukh}{article}{
   author={Georgiev, Bogdan},
   author={Mukherjee, Mayukh},
   title={Nodal geometry, heat diffusion and Brownian motion},
   journal={Anal. PDE},
   volume={11},
   date={2018},
   number={1},
   pages={133--148},
   issn={2157-5045},
   review={\MR{3707293}},
   doi={10.2140/apde.2018.11.133},
}
	
  \bib{gil-tru}{book}{
   author={Gilbarg, David},
   author={Trudinger, Neil S.},
   title={Elliptic partial differential equations of second order},
   series={Classics in Mathematics},
   note={Reprint of the 1998 edition},
   publisher={Springer-Verlag, Berlin},
   date={2001},
   pages={xiv+517},
   isbn={3-540-41160-7},
   review={\MR{1814364}},
}

\bib{grig-saloff}{article}{
   author={Grigor'yan, Alexander},
   author={Saloff-Coste, Laurent},
   title={Hitting probabilities for Brownian motion on Riemannian manifolds},
   journal={J. Math. Pures Appl. (9)},
   volume={81},
   date={2002},
   number={2},
   pages={115--142},
   issn={0021-7824},
   review={\MR{1994606}},
   doi={10.1016/S0021-7824(01)01244-2},
}

\bib{hardt-simon}{article}{
   author={Hardt, Robert},
   author={Simon, Leon},
   title={Nodal sets for solutions of elliptic equations},
   journal={J. Differential Geom.},
   volume={30},
   date={1989},
   number={2},
   pages={505--522},
   issn={0022-040X},
   review={\MR{1010169}},
}

\bib{hein-kilp-mart}{book}{
   author={Heinonen, Juha},
   author={Kilpel\"{a}inen, Tero},
   author={Martio, Olli},
   title={Nonlinear potential theory of degenerate elliptic equations},
   series={Oxford Mathematical Monographs},
   note={Oxford Science Publications},
   publisher={The Clarendon Press, Oxford University Press, New York},
   date={1993},
   pages={vi+363},
   isbn={0-19-853669-0},
   review={\MR{1207810}},
}

  \bib{kannai}{article}{
   author={Kannai, Y.},
   title={Off diagonal short time asymptotics for fundamental solutions of
   diffusion equations},
   journal={Comm. Partial Differential Equations},
   volume={2},
   date={1977},
   number={8},
   pages={781--830},
   issn={0360-5302},
   review={\MR{603299}},
   doi={10.1080/03605307708820048},
}

\bib{koosis}{book}{
   author={Koosis, Paul},
   title={The logarithmic integral. I},
   series={Cambridge Studies in Advanced Mathematics},
   volume={12},
   note={Corrected reprint of the 1988 original},
   publisher={Cambridge University Press, Cambridge},
   date={1998},
   pages={xviii+606},
   isbn={0-521-59672-6},
   review={\MR{1670244}},
}

\bib{krahn}{article}{
   author={Krahn, E.},
   title={\"{U}ber eine von Rayleigh formulierte Minimaleigenschaft des Kreises},
   language={German},
   journal={Math. Ann.},
   volume={94},
   date={1925},
   number={1},
   pages={97--100},
   issn={0025-5831},
   review={\MR{1512244}},
   doi={10.1007/BF01208645},
}

\bib{li-xu}{article}{
   author={Li, Junfang},
   author={Xu, Xiangjin},
   title={Differential Harnack inequalities on Riemannian manifolds I:
   linear heat equation},
   journal={Adv. Math.},
   volume={226},
   date={2011},
   number={5},
   pages={4456--4491},
   issn={0001-8708},
   review={\MR{2770456}},
   doi={10.1016/j.aim.2010.12.009},
}

\bib{li-yau}{article}{
   author={Li, Peter},
   author={Yau, Shing-Tung},
   title={On the parabolic kernel of the Schr\"{o}dinger operator},
   journal={Acta Math.},
   volume={156},
   date={1986},
   number={3-4},
   pages={153--201},
   issn={0001-5962},
   review={\MR{834612}},
   doi={10.1007/BF02399203},
}

\bib{lieb}{article}{
   author={Lieb, Elliott H.},
   title={On the lowest eigenvalue of the Laplacian for the intersection of
   two domains},
   journal={Invent. Math.},
   volume={74},
   date={1983},
   number={3},
   pages={441--448},
   issn={0020-9910},
   review={\MR{724014}},
   doi={10.1007/BF01394245},
}

\bib{logu-mali-pcmi}{article}{
   author={Logunov, Alexander},
   author={Malinnikova, Eugenia},
   title={Lecture notes on quantitative unique continuation for solutions of
   second order elliptic equations},
   conference={
      title={Harmonic analysis and applications},
   },
   book={
      series={IAS/Park City Math. Ser.},
      volume={27},
      publisher={Amer. Math. Soc., [Providence], RI},
   },
   date={[2020] \copyright 2020},
   pages={1--33},
   review={\MR{4249624}},
}

\bib{logu-mali-icm}{article}{
   author={Logunov, Alexander},
   author={Malinnikova, Eugenia},
   title={Quantitative propagation of smallness for solutions of elliptic
   equations},
   conference={
      title={Proceedings of the International Congress of
      Mathematicians---Rio de Janeiro 2018. Vol. III. Invited lectures},
   },
   book={
      publisher={World Sci. Publ., Hackensack, NJ},
   },
   date={2018},
   pages={2391--2411},
   review={\MR{3966855}},
}

\bib{logu-papa}{article}{
   author={Logunov, A.},
   author={Papazov, H.},
   title={An elliptic adaptation of ideas of Carleman and Domar from complex
   analysis related to Levinson's loglog theorem},
   journal={J. Math. Phys.},
   volume={62},
   date={2021},
   number={6},
   pages={Paper No. 061510, 10},
   issn={0022-2488},
   review={\MR{4276966}},
   doi={10.1063/5.0044798},
}

\bib{mang-inrad}{article}{
   author={Mangoubi, Dan},
   title={On the inner radius of a nodal domain},
   journal={Canad. Math. Bull.},
   volume={51},
   date={2008},
   number={2},
   pages={249--260},
   issn={0008-4395},
   review={\MR{2414212}},
   doi={10.4153/CMB-2008-026-2},
}

\bib{mang-la}{article}{
   author={Mangoubi, Dan},
   title={Local asymmetry and the inner radius of nodal domains},
   journal={Comm. Partial Differential Equations},
   volume={33},
   date={2008},
   number={7-9},
   pages={1611--1621},
   issn={0360-5302},
   review={\MR{2450173}},
   doi={10.1080/03605300802038577},
}
	
 \bib{mazya}{book}{
   author={Maz'ja, Vladimir G.},
   title={Sobolev spaces},
   series={Springer Series in Soviet Mathematics},
   note={Translated from the Russian by T. O. Shaposhnikova},
   publisher={Springer-Verlag, Berlin},
   date={1985},
   pages={xix+486},
   isbn={3-540-13589-8},
   review={\MR{817985}},
   doi={10.1007/978-3-662-09922-3},
}

\bib{mazya-shubin}{article}{
   author={Maz\cprime ya, Vladimir},
   author={Shubin, Mikhail},
   title={Can one see the fundamental frequency of a drum?},
   journal={Lett. Math. Phys.},
   volume={74},
   date={2005},
   number={2},
   pages={135--151},
   issn={0377-9017},
   review={\MR{2191951}},
   doi={10.1007/s11005-005-0010-1},
}

 \bib{norr}{article}{
   author={Norris, James R.},
   title={Heat kernel asymptotics and the distance function in Lipschitz
   Riemannian manifolds},
   journal={Acta Math.},
   volume={179},
   date={1997},
   number={1},
   pages={79--103},
   issn={0001-5962},
   review={\MR{1484769}},
   doi={10.1007/BF02392720},
}
\end{biblist}
\end{bibdiv}
\vspace{3ex}

\textsc{Section de math\'{e}matiques, Universit\'e de Gen\`{e}ve, 24 rue du G\'{e}n\'{e}ral Dufour,
Case postale 64,
1211 Gen\`{e}ve 4, Switzerland}

\emph{Email address: }\texttt{\textbf{philippe.charron@unige.ch}}
\vspace{3ex}

\textsc{Einstein Institute of Mathematics, Edmond J. Safra Campus,
The Hebrew University of Jerusalem, Jerusalem 9190401, Israel}

\emph{Email address: }\texttt{\textbf{dan.mangoubi@mail.huji.ac.il}}

\end{document}